\newcommand{\doublespace}
   {\addtolength{\baselineskip}{0.25\baselineskip}}
\newtheorem{thm}{Theorem}[section]
\newtheorem{lem}[thm]{Lemma}
\newtheorem{prop}[thm]{Proposition}
\theoremstyle{definition}
\newtheorem{rem}[thm]{Remark}
\newtheorem{exam}[thm]{Example}
\numberwithin{equation}{section}
\newcommand{\N}{\mathbb{N}}
\newcommand\R{\mathbb{R}}
\newcommand{\FC}{R}     
\newcommand{\tR}{{\tilde R}}
\newcommand{\tS}{{\tilde S}}
\renewcommand\P{\mathcal{P}(\R)}
\newcommand{\PM}{\mathcal{P}(\R_+)}
\newcommand\ID{\mathcal{ID}}
\newcommand{\Aa}{\kappa} %
\newcommand{\Bb}{\tilde\kappa} %
\newcommand\CH{\mathbf{S}}
\newcommand\BH{\mathbf{B}}
\newcommand\FH{\mathbf{F}}
\newcommand\PO{\mathbf{P}}
\newcommand\MP{{\bm \pi}}
\newcommand{\fs}{\mathbf{f}}
\newcommand{\bs}{\mathbf{b}}
\newcommand\cs{\mathbf{s}}
\newcommand\ed{\mathbf{e}}
\newcommand\uni{\mathbf{u}}
\newcommand\WS{\mathbf{w}}
\newcommand{\fmax}{\Box \hspace{-.7em} \lor}
\newcommand{\fmaxxx}{\Box \hspace{-.5em} \lor}
\newcommand{\bmax}{\cup \hspace{-.65em} \lor}
\newcommand{\bmaxxx}{\cup \hspace{-.5em} \lor}
\title[Homomorphisms relative to additive convolutions and max-convolutions]{Homomorphisms relative to additive convolutions and max-convolutions:  free, boolean and classical cases}
\author{Takahiro Hasebe \and Yuki Ueda}
\subjclass[2010]{Primary 46L54; Secondary 60E07; 60G70.}
\keywords{additive convolution, max-convolution, free multiplicative convolution, stable distributions, extreme value distributions, infinitely divisible distributions, Bercovici-Pata bijection}
\address{Takahiro Hasebe:  
Department of Mathematics, Hokkaido University,
Kita 10, Nishi 8, Kita-Ku, Sapporo, Hokkaido, 060-0810, Japan}
\email{thasebe@math.sci.hokudai.ac.jp}
\address{
Yuki Ueda: 
Department of General Science, National Institute of Technology, Ichinoseki College,
Takanashi, Hagisho, Ichinoseki, Iwate 021-8511, Japan }
\email{yuki1114@ichinoseki.ac.jp}
\begin{document}

\maketitle  
\doublespace
\pagestyle{myheadings} 
%

\begin{abstract}
We introduce new homomorphisms relative to additive convolutions and max-convolutions in free, boolean and classical cases. Crucial roles are played by the limit distributions for free multiplicative law of large numbers.   
\end{abstract}

\section{Introduction}

In non-commutative probability theory, self-adjoint operators are interpreted as (real-valued) random variables. A striking feature is that various notions of independence exist for those random variables.  In particular, free independence (also called freeness) has found its applications to operator algebras and random matrices, so that it has been intensively studied; see \cite{NS06,MS17} and references therein.  
 Several convolutions of probability measures are associated with each notion of independence: the additive convolution describes the law of the sum of independent random variables; multiplicative convolution describes the law of product of independent random variables; max-convolution describes the law of maximum (in the Ando's sense \cite{A89}) of independent random variables.

Limit theorems in non-commutative probability are one of the main topics of interests. Limit theorems for addition of independent random variables are quite parallel to those in probability theory as observed in the pioneering work \cite{BP99} and other papers, e.g.\ \cite{CG08,Wan08}. Such a parallelism is remarkable, while the effect of non-commutativity is not very visible in the results. 
On the other hand, the non-commutativity of random variables appears more clearly in multiplicative law of large numbers. 
In probability theory,  the asymptotic behavior of products of a large number of independent positive random variables reduces to the usual law of large numbers for addition of real-valued random variables, because the exponential mapping is a homomorphism from $(\R,+)$ onto $((0,\infty), \cdot)$. However, for non-commutative random variables such as random matrices, law of large numbers for products of independent random variables is no longer obvious because the exponential mapping is not a homomorphism (an attempt to recover the homomorphism property is found in \cite{AA17} in the unitary case). 

In free probability, Tucci \cite{Tuc10}, Haagerup and M\"oller \cite{HM13} formulated and investigated the free multiplicative law of large numbers for non-negative free random variables. In terms of probability measures, it can be formulated as the convergence of  
\begin{equation}\label{eq:FMLLN}
(\mu^{\boxtimes n})^\frac{1}{n}, \qquad n\in \N, 
\end{equation}
where $\mu$ is a probability measure on $\R_+:=[0,\infty)$, $\boxtimes$ is free multiplicative convolution (see Section \ref{sec2}) and $\nu^{\alpha}$ denotes the push-forward of a measure $\nu$ by the mapping $x\mapsto x^\alpha$ for $\alpha \in\R$. The limit distribution of the sequence \eqref{eq:FMLLN} exists and is denoted by $\Phi(\mu)$ below. When $\mu\ne\delta_0$, the limit distribution is characterized by 
\begin{equation}\label{eq:HM}
 \Phi(\mu)(\{0\})=\mu(\{0\})\qquad \text{and}\qquad \Phi(\mu)\left(\left[0,\frac{1}{S_\mu(t-1)}\right]\right) = t, \qquad t \in (\mu(\{0\}),1), 
\end{equation}
where $S_\mu$ is the $S$-transform of $\mu$ (see \cite[Theorem 2]{HM13}). Unlike the additive case where the limit distribution is always a delta measure, the above limit distribution $\Phi(\mu)$ need not be a delta measure and it is far from universal; actually the mapping $\mu\mapsto \Phi(\mu)$ is injective.  This kind of non-universal limits should appear in the level of random matrices of finite size since free probability somehow describes the large size limit of random matrices. However,  as far as the authors know, there have not been such results for random matrices. 

The second-named author discovered in \cite[Theorems 1.1 and 1.2]{Ua} an identity involving $\Phi,$ free additive convolution $\boxplus$ and free max-convolution $\fmax$, boolean additive convolution $\uplus$ and boolean max-convolution $\bmax$ (see Section \ref{sec2} for those convolutions), in the forms   
\begin{equation}\label{eq:Ueda}
\Phi(D_{1/t} (\mu^{\boxplus t})) = \Phi(\mu)^{\fmaxxx t}, \qquad t\ge1 \qquad  \text{and} \qquad \Phi(D_{1/t} (\mu^{\uplus t})) = \Phi(\mu)^{\bmaxxx t}, \qquad t > 0, 
\end{equation}
where $D_c (\nu)$, called the dilation or scaling, is the push-forward of a measure $\nu$ by the mapping $x \mapsto c x$ for $c\ge0$. An interesting point here is that there is no apparent or natural reason that suggests a connection of free multiplicative law of large numbers, free additive convolution and free max-convolution, or boolean ones.   

The present paper seeks for a better understanding of those unexpected connections between various convolutions and $\Phi$. In fact, a conceptual explanation of those connections is still lacking, but in this paper we present further non-trivial identities closely related to \eqref{eq:Ueda}. The main result is that the mapping 
\begin{equation}\label{eq:Theta}
\mu\mapsto\Theta(\mu):= \Phi(\MP^{-1}\boxtimes \mu)
\end{equation}
is a homomorphism from the semigroup $(\PM, \boxplus)$ to the semigroup $(\PM,\fmax)$, where $\PM$ is the set of Borel probability measures on $\R_+$ and $\MP$ is the standard Marchenko-Pastur law. Moreover, the mapping 
\begin{equation}\label{eq:Xi}
\mu\mapsto\Xi(\mu):= \Phi(\MP^{-1}\boxtimes \MP \boxtimes \mu)
\end{equation}
is a homomorphism from $(\PM, \uplus)$ to $(\PM,\bmax)$. 

Section \ref{sec2} summarizes several convolutions and computational tools needed in this paper.  Section \ref{sec3} is the main part, where the homomorphism properties \eqref{eq:Theta} and \eqref{eq:Xi} for additive convolution and max-convolution are established. A classical version of the homomorphism is also introduced. Section \ref{sec4} is devoted to intertwining relations of different types of homomorphisms including Bercovici--Pata bijections and some others related to stable distributions.

\section{Preliminaries}\label{sec2}

We summarize convolutions and the analytic characterizations of them.  Some convolutions and results below can be extended to the set $\P$ of probability measures on $\R$, but we restrict ourselves mostly to $\PM$ to get refined descriptions of transforms. Also, this setting is sufficient for our purpose.  
\subsection{Classical convolutions}

For $\mu \in \PM$, let $C_\mu, M_\mu$ be the cumulant transform and Mellin transform of $\mu \in \PM$, respectively: 
\begin{align}
C_\mu(t) &= \log \int_{\R_+} e^{t u} \,d\mu(u), \qquad t\le0, \\
M_\mu(t) &= \int_{(0,\infty)} x^t \,d\mu(x), \qquad t \in i\R.
\end{align}
The domains of the functions $C_\mu$ and $M_\mu$ may be extended when the integrals converge. Classical additive convolution $\ast$ (see e.g.\ \cite[Definition 2.4]{S13}) and   multiplicative convolution $\circledast$ are respectively characterized by 
\[
C_{\mu\ast\nu} = C_\mu + C_\nu, \qquad M_{\mu\circledast\nu}=M_\mu M_\nu.
\] 
Note that the study of multiplicative convolution reduces to additive convolution by the exponential mapping.

Finally, the max-convolution $\lor$ of $\mu,\nu\in \PM$ is characterized by
\[
(\mu\lor \nu)([0,t])=\mu([0,t])\nu([0,t]), \qquad t\ge0,
\]
see \cite{R87} for further information: extreme value distributions, higher-dimensional max-convolution and max-infinitely divisible distributions, etc.

\begin{exam}
The positive stable law $\cs_\alpha$ with index $0< \alpha <1$ has the cumulant transform and the Mellin transform
\[
C_{\cs_\alpha}(t) = - (-t)^\alpha, \qquad t \le 0; \qquad M_{\cs_\alpha}(t) = \frac{\Gamma(1-\frac{t}{\alpha})}{\Gamma(1-t)}, \qquad -\infty < t < \alpha, 
\]
see \cite{Zol86} or \cite{S13} for further information. 
\end{exam}

\subsection{Free convolutions}

For $\mu\in\PM$ define the Cauchy transform 
\[
G_\mu(s) = \int_0^\infty \frac{1}{s-x} \,d\mu(x), \qquad s <0, 
\]
and the number 
\begin{equation}\label{eq:negative_moment}
\Aa_\mu:= \int_0^\infty x^{-1}\,d\mu(x) \in (0,\infty], 
\end{equation}
where we understand that $\Aa_\mu=\infty$ if $\mu(\{0\})>0$. 
By calculus we see that $G_\mu' <0$ on $(-\infty,0)$ and $G_\mu((-\infty,0))=(-\Aa_\mu,0)$, so that the compositional inverse $G_\mu^{-1}\colon(-\Aa_\mu,0) \to (-\infty,0)$ is defined. The $R$-transform of $\mu$ is defined by\footnote{This definition of $R$-transform is taken from \cite[Lecture 16]{NS06}, but many papers adopt the other definition $R_\mu(t) =  G_\mu^{-1}(t)-1/t.$} 
$$
R_\mu(t) = t G_\mu^{-1}(t)-1, \qquad t \in (-\Aa_\mu,0). 
$$ 
Free additive convolution is then  characterized by
\begin{equation}\label{eq:R} 
R_{\mu\boxplus \nu} = R_\mu  + R_\nu
\end{equation} 
 on the intersection of the three transforms. The formula \eqref{eq:R} actually holds on $(-\Aa_{\mu\boxplus\nu},0)$ because  $\Aa_{\mu\boxplus \nu} \le \Aa_\mu, \Aa_\nu$. This can be proved as 
$$
\Aa_{\mu \boxplus \nu} = \lim_{\epsilon\to0^+}\tau[(X+Y+\epsilon)^{-1}] \le \lim_{\epsilon\to0^+}\tau[(X+\epsilon)^{-1}] =\Aa_\mu, 
$$
where $X, Y$ are free self-adjoint elements affiliated with a finite von Neumann algebra with normal faithful tracial state $\tau$ having the distributions $\mu,\nu$, respectively. 

If $\mu\in\PM\setminus\{\delta_0\}$ then we have  
$
[sG_\mu(s)]' = -\int_0^\infty \frac{x}{(s-x)^2}d\mu(x)<0,  
$
and so we obtain by calculus
$$
R_\mu'(t) = \frac{[s G_\mu(s)]'}{G_\mu'(s)}>0, \qquad t =G_\mu(s), ~ s \in (-\infty,0), 
$$ 
and $R_\mu((-\Aa_\mu,0)) = (-1+\mu(\{0\}),0)$. For notational simplicity we sometimes extend $R_\mu$ to the homeomorphism from $[-\Aa_\mu,0]$ onto $[-1+\mu(\{0\}),0]$. The compositional inverse $R_\mu^{-1}$ can then be defined on $(-1+\mu(\{0\}),0)$ and the $S$-transform is defined by 
\[
S_\mu(u) = \frac{R_\mu^{-1}(u)}{u}, \qquad u\in(-1+\mu(\{0\}),0). 
\] 
The free multiplicative convolution of $\mu,\nu\in\PM\setminus\{\delta_0\}$ is characterized by 
\begin{equation}\label{eq:S}
S_{\mu\boxtimes \nu} = S_\mu S_\nu  
\end{equation}
 on the intersection of the domains of three functions. Actually, it holds that 
\begin{equation}\label{eq:atoms}
(\mu \boxtimes\nu)(\{0\})=\max\{\mu(\{0\}), \nu(\{0\})\}
\end{equation} 
 according to \cite[Theorem 4.1]{Bel03}, so that the common domain for \eqref{eq:S} is the interval $(-1+\max\{\mu(\{0\}), \nu(\{0\})\},0)$. 

The $S$-transform is defined in a different way in many papers. For $\mu\in\PM\setminus\{\delta_0\}$ let $\psi_\mu$ be the moment-generating function of $\mu$ defined by 
\[
\psi_\mu(t) = \int_{\R_+} \frac{tu}{1-tu} d\mu(u), \qquad t \in (-\infty,0].   
\] 
By calculus, $\psi_\mu\colon (-\infty,0) \to (-1+\mu(\{0\}),0)$ is strictly increasing and so has the compositional inverse $\psi_\mu^{-1}$ on $(-1+\mu(\{0\}),0)$. The $S$-transform of $\mu$ can alternatively be defined by 
\[
S_\mu(t) = \frac{1+t}{t}\psi_\mu^{-1}(t), \qquad t \in (-1+\mu(\{0\}),0).
\]
The reader is referred to \cite[Section 6]{BV93} for further information. 

Finally,  the free max-convolution of $\mu,\nu \in \PM$ is characterized by 
\[
(\mu\Box\hspace{-.92em}\lor \nu)([0,t]) = \max\{\mu([0,t]) + \nu([0,t]) -1, 0\}, \qquad t\ge0,
\]
see \cite[Sections 2 and 3]{BAV06} for further information. 

Sometimes the $R$-transform of a probability measure $\mu \in \PM$, defined on $(-\Aa_\mu,0)$, has a univalent analytic continuation to a larger interval; denote by $\tR_\mu$ the analytic continuation of $R_\mu$ to the maximal interval of the form $(-\Bb_\mu,0), \Bb_\mu \in [\Aa_\mu,\infty]$ on which $\tR_\mu$ is univalent. Correspondingly, we denote by $\tS_\mu$ the analytic continuation of $S_\mu$ defined on the range of $\tR_\mu$. 

\begin{exam}\label{ex:free_stable}
The positive free stable law $\fs_\alpha$ with index $\alpha \in (0,1)$, introduced in \cite[Section 7]{BV93}, has the $R$-transform and the $S$-transform (see \cite[Proposition 3.5]{AH16a})
\[
\tR_{\fs_\alpha}(t) =  - (-t)^\alpha, \qquad t \in (-\infty,0); \qquad \tS_{\fs_\alpha}(t) =  (-t)^{\frac{1-\alpha}{\alpha}}, \qquad t\in (-
\infty,0). 
\]
This means that $\Bb_{\fs_\alpha}=\infty$. On the other hand, it is known that $\fs_\alpha$ does not have an atom, so that the range of $\FC_{\fs_\alpha} = \tR_{\fs_\alpha}|_{(-\Aa_{\fs_\alpha},0)}$ should be $(-1,0)$. This implies that $\Aa_{\fs_\alpha} = 1$. See \cite{AH16a, BP99,HK14} for further information. 
\end{exam}

\begin{exam}
The Marchenko-Pastur law $\MP_\lambda$ with rate $\lambda>0$ is defined by 
\begin{equation}
\MP_\lambda(dx) = \max\{0,1-\lambda\}\delta_0(dx) + \frac{\sqrt{(x-a)(b-x)}}{2\pi x}  \,\mathbf1_{(a,b)}(x)\,dx, 
\end{equation} 
where $a=(1-\sqrt{\lambda})^2$ and $b=(1+\sqrt{\lambda})^2$. The special case $\MP_1$ is simply denoted by $\MP$ and is called the standard Marchenko-Pastur law. The $R$-transform and the $S$-transform are known to be
\[
\tR_{\MP_\lambda}(t) = \frac{\lambda t}{1-t}, \qquad t \in(-\infty,0); \qquad \tS_{\MP_\lambda}(t) = \frac{1}{\lambda+t}, \qquad t\in (-\lambda,0). 
\]
This implies that $\Bb_{\MP_\lambda}=\infty$. 
If $\lambda>1$ then $\MP_\lambda$ does not have an atom at $0$, and hence $\Aa_{\MP_\lambda}$ is determined so that $\tR_{\MP_\lambda}|_{(-\Aa_{\MP_\lambda},0)}$ has the range $(-1,0)$. This yields that $\Aa_{\MP_\lambda}= 1/(\lambda-1)$. If $\lambda \in(0,1]$ then $\Aa_{\MP_\lambda}= \infty$ by the definition \eqref{eq:negative_moment}. We can check that $R_{\MP_\lambda}$ maps $(-\infty,0)$ onto $(-\lambda,0)=(-1+\MP_\lambda(\{0\}),0)$. In this case the domains of $R_{\MP_\lambda}$ and $\tR_{\MP_\lambda}$ coincide. 
\end{exam}

\begin{exam}
Let $\WS_{m,v}$ be the Wigner semicircle distribution with mean $m$ and variance $v>0$ such that $m \ge 2\sqrt{v}$:  
\[
\WS_{m,v}(dx) = \frac{\sqrt{4v - (x-m)^2}}{2\pi v}  \,\mathbf1_{(m-2\sqrt{v}, m +2\sqrt{v})}(x)\,dx. 
\]
Its $R$-transform is given by
\[
\tR_{\WS_{m,v}}(t)=mt + vt^2, \qquad t \in (-\frac{m}{2v},0). 
\]
Since $\WS_{m,v}$ has no atom at 0, $-\Aa_{\WS_{m,v}}$ is a solution to $mt +vt^2=-1$. The correct solution makes $\FC_{\WS_{m,v}}$ univalent on $(-\Aa_{\WS_{m,v}},0)$, so that it is the larger one. Hence $\Aa_{\WS_{m,v}} = \frac{m -\sqrt{m^2-4v}}{2v}$. 

\end{exam}

\subsection{Boolean convolutions}
The $\eta$-transform, also called the boolean cumulant transform, of $\mu\in\PM$ is defined by 
\[
\eta_\mu(t) = \frac{\psi_\mu(t)}{1+\psi_\mu(t)},\qquad t \in (-\infty,0). 
\]
Boolean additive convolution $\uplus$ and boolean max-convolution $\bmax$ are then respectively characterized by  
\begin{equation}\label{eq:sum_boole}
\eta_{\mu\uplus\nu} = \eta_\mu + \eta_{\nu},\qquad  \frac1{(\mu\cup\hspace{-.87em}\lor \nu)([0,t])}=\frac{1}{\mu([0,t])} + \frac{1}{\nu([0,t])} -1,  \quad \text{for} \quad t\ge0,
\end{equation}
where $(\mu\cup\hspace{-.92em}\lor\nu)([0,t])$ is set to be $0$ when $\mu([0,t])=0$ or $\nu([0,t])=0$. The reader is referred to \cite[Section 3]{SW97} and \cite[Section 3]{VV17} for further details.

\begin{exam}
The positive boolean stable law $\bs_\alpha$ with index $\alpha \in (0,1)$, introduced in \cite[Section 3]{SW97}, has the density (see \cite[Proposition 4]{HS15}) 
\[
\frac{\sin \pi\alpha}{\pi } \cdot \frac{x^{\alpha-1}}{x^{2\alpha} + 2 x^\alpha\cos \pi\alpha +1} \mathbf1_{(0,\infty)}(x)
\]
and the $\eta$-transform and the $S$-transform (see  \cite[Proposition 3.5]{AH16a})
\[
\eta_{\bs_\alpha}(t) = - (-t)^\alpha, \qquad t \in (-\infty,0);  \qquad S_{\bs_\alpha}(t) = \left(\frac{-t}{1+t}\right)^{\frac{1-\alpha}{\alpha}}, \qquad t \in (-1,0). 
\]
\end{exam}

\subsection{Infinitely divisible distributions}



Given an associative convolution $\star$ on the set of (Borel) probability measures, let $\ID(\R,\star)$ be the set of infinitely divisible distributions on $\R$ with respect to $\star$, that is, a probability measure $\mu$ on $\R$ belongs to $\ID(\R,\star)$ if and only if for every $n\in\N$ there exists $\mu_n \in \P$, called a convolution $n$-th root of $\mu$, such that 
$$
\mu = \underbrace{\mu_n \star \mu_n \star \cdots \star \mu_n}_{\text{$n$ fold}}.
$$
 We also set 
$
\ID(\R_+,\star) 
$
to be the class of all infinitely divisible distributions on $\R_+$ such that one may take convolution $n$-th roots from $\PM$ for all $n\in\N$. The members in the class $\ID(\R_+,\boxplus)$ are called free regular distributions introduced in \cite[Section 2.1]{PS12} and investigated further in \cite{AHS13, S11}.  

Some classes are trivial such as $\ID(\R,\lor)= \ID(\R,\uplus) = \P$ and $\ID(\R_+, \bmax)=\PM$, while some are not; for example $\ID(\R,\ast), \ID(\R,\boxplus) \subsetneqq \P$ (see \cite{S13,BV93}). 
It is known that actually $\ID(\R_+,\ast) = \ID(\R,\ast)\cap \PM$, while $\ID(\R_+,\boxplus) \subsetneqq \ID(\R,\boxplus)\cap \PM$; see \cite[p.105]{PS12} and \cite[Proposition 3.1]{S11}.

The class of infinitely divisible distributions is of interest from the viewpoint of stochastic processes of independent increments and of limit theorems. The interested reader can consult \cite{Bia98, BNT02}.

\section{homomorphisms from additive convolutions to max-convolutions} \label{sec3}

\subsection{The free case}\label{sec3F}

In order to prove that the mapping $\Theta$ defined in \eqref{eq:Theta} is a homomorphism, we start by characterizing the distribution function of  $\Theta(\mu)$.

\begin{prop}\label{prop:key}
For $\mu\in\PM$, we obtain
\[
\Theta(\mu)([0,t]) = 
\begin{cases}
0, & 0 \le t < \Aa_\mu^{-1}, \\
1+R_\mu (-t^{-1}), & t \ge \Aa_\mu^{-1},  
\end{cases}  
\]
where $\Aa_\mu^{-1}$ is set to be $0$ if $\Aa_\mu=\infty$. 
\end{prop}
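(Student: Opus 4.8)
The plan is to compute the distribution function of $\Theta(\mu) = \Phi(\MP^{-1}\boxtimes\mu)$ directly from the Haagerup--M\"oller characterization \eqref{eq:HM}, translating everything into the language of $S$-transforms and then back into $R$-transforms. First I would dispose of the trivial case $\mu = \delta_0$: here $\MP^{-1}\boxtimes\delta_0 = \delta_0$ (using \eqref{eq:atoms}, since $\MP^{-1}$ has no atom at $0$ but $\delta_0$ does — actually one must be slightly careful, as $\MP$ has an atom only when $\lambda<1$; for $\lambda=1$ the atom is $0$, so $\MP^{-1}$ also has no atom, and the product with $\delta_0$ has atom $1$ at $0$), $\Phi(\delta_0) = \delta_0$, and $\Aa_{\delta_0} = \infty$, so the formula reads $\Theta(\delta_0)([0,t]) = 1 + R_{\delta_0}(-t^{-1}) = 1 + 0 = 1$ for all $t \ge 0$, which is correct. (One should double-check the convention $R_{\delta_0}\equiv 0$ and that $\Aa_{\delta_0}^{-1}=0$ makes the two branches consistent.)

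For the main case $\mu \in \PM\setminus\{\delta_0\}$, set $\nu := \MP^{-1}\boxtimes\mu$. The key computational input is the $S$-transform of $\MP^{-1}$. Since $\MP = \MP_1$ has $\tS_{\MP}(t) = 1/(1+t)$, and since pushing forward by $x\mapsto x^{-1}$ inverts the $S$-transform in the sense that $S_{\nu^{-1}}(t) = 1/S_\nu(-1-t)$ (this is the standard reflection formula for the $S$-transform under inversion; I would either cite it or verify it quickly from the definition of $S$ via $\psi$), one gets $S_{\MP^{-1}}(t) = 1 + (-1-t) = -t$ on the appropriate interval. Then by \eqref{eq:S}, $S_\nu(t) = S_{\MP^{-1}}(t)\,S_\mu(t) = -t\,S_\mu(t)$. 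Recalling $S_\mu(u) = R_\mu^{-1}(u)/u$, this gives $S_\nu(t) = -R_\mu^{-1}(t)$, i.e. $S_\nu$ is, up to sign, the inverse function of $R_\mu$. Now plug into \eqref{eq:HM}: the point $t\in(\nu(\{0\}),1)$ sits at the right endpoint of the interval $[0, 1/S_\nu(t-1)]$ in the distribution function of $\Phi(\nu) = \Theta(\mu)$. Writing $s = 1/S_\nu(t-1) = -1/R_\mu^{-1}(t-1)$ and inverting: $R_\mu^{-1}(t-1) = -1/s$, hence $t - 1 = R_\mu(-1/s)$, so $\Theta(\mu)([0,s]) = t = 1 + R_\mu(-s^{-1})$, which is exactly the claimed formula. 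The atom part $\Theta(\mu)(\{0\}) = \nu(\{0\}) = \mu(\{0\})$ (by \eqref{eq:atoms}, since $\MP^{-1}(\{0\})=0$) matches the formula since $1 + R_\mu(0^-) = 1 + (-1 + \mu(\{0\})) = \mu(\{0\})$.

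The remaining work is bookkeeping on domains and endpoints, and this is where I expect the main obstacle. I need: (i) that the range of $t\mapsto 1/S_\nu(t-1)$ as $t$ runs over $(\nu(\{0\}),1)$ is exactly $(\Aa_\mu^{-1},\infty)$, which amounts to identifying the endpoints of the domain of $R_\mu^{-1}$, namely $R_\mu$ maps $(-\Aa_\mu,0)$ onto $(-1+\mu(\{0\}),0)$ as recalled in Section \ref{sec2}; so $-1/s = R_\mu^{-1}(t-1)$ runs over $(-\Aa_\mu,0)$, giving $s \in (\Aa_\mu^{-1},\infty)$, consistent with the two branches of the formula and with $\Theta(\mu)$ having no mass on $[0,\Aa_\mu^{-1})$ beyond the atom at $0$ (which requires $\Aa_\mu = \infty$, i.e. $\mu(\{0\})>0$, to be nonzero — consistent); (ii) continuity/monotonicity of $R_\mu$ on the closed interval $[-\Aa_\mu,0]$ via the homeomorphic extension mentioned in the preliminaries, so that the formula is valid as an equality of distribution functions (right-continuous, nondecreasing) and not just on a dense set; and (iii) the case $\Aa_\mu<\infty$ versus $\Aa_\mu=\infty$ handled uniformly. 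None of these is deep, but getting the inequalities and the half-open/closed conventions exactly right — particularly the interplay between $\mu(\{0\})$, $\Aa_\mu$, and the endpoint behavior of $R_\mu$ — is the delicate part, and I would treat it carefully rather than gloss over it.
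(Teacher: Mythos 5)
Your proposal is correct and follows essentially the same route as the paper: apply the Haagerup--M\"oller characterization \eqref{eq:HM} to $\MP^{-1}\boxtimes\mu$, compute $S_{\MP^{-1}}(u)=-u$ by the Haagerup--Schultz inversion formula so that $S_{\MP^{-1}\boxtimes\mu}(u)=-R_\mu^{-1}(u)$, invert to get $1+R_\mu(-t^{-1})$ on $(\Aa_\mu^{-1},\infty)$, and settle the endpoint and the interval $[0,\Aa_\mu^{-1})$ by the boundary behaviour of $R_\mu$ together with monotonicity and right-continuity of distribution functions. The only blemish is the notational slip ``$R_\mu(0^-)$'', where you clearly mean the boundary value $-1+\mu(\{0\})$ of $R_\mu$ at the left endpoint $-\Aa_\mu$ of its domain (i.e.\ the limit as the argument $-t^{-1}$ tends there), which is what your computation actually uses.
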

\begin{rem} If $\Aa_\mu=\infty$ then the interval $[0, \Aa_\mu^{-1})$ is empty, so that the formula is simply $\Theta(\mu)([0,t]) = 
1+R_\mu (-t^{-1})$ for all $t \ge 0$. If $\Aa_\mu<\infty$ then $\mu(\{0\})=0$ and hence $\Theta(\mu)([0,\Aa_\mu^{-1}])=0$. 
\end{rem}
\begin{proof} 

Note that the identity
\[
\Theta(\mu)\left( \left[ 0,\frac{1}{S_{\MP^{-1}}(u-1)S_\mu(u-1)}\right]\right)=u
\]
holds for $u\in ((\MP^{-1}\boxtimes \mu)(\{0\}),1)=(\mu(\{0\}),1)$. Using $S_\MP(u) = 1/(u+1)$ and Haagerup-Schultz's formula \cite[Proposition 3.13]{HS07}, we obtain $S_{\MP^{-1}}(u)=1/S_{\MP}(-u-1)=-u$ for $u\in (-1,0$). Therefore we have
\[
\frac{1}{S_{\MP^{-1}}(u-1)S_\mu(u-1)}=-\frac{1}{(u-1)S_\mu(u-1)}=-\frac{1}{R_\mu^{-1}(u-1)}, \qquad u\in (\mu(\{0\}),1).
\]
Substituting $u=1+R_\mu(-1/t)$, where $t \in (\Aa_\mu^{-1},\infty)$, into the above yields that 
\[
\Theta(\mu)([0,t]) = 1+R_\mu (-t^{-1}). 
\]
Taking the limit $t\downarrow \Aa_\mu^{-1}$ further implies that 
$
\Theta(\mu)([0,\Aa_\mu^{-1}]) = \mu(\{0\}).   
$
 Thus the desired formula holds for all $t\ge \Aa_\mu^{-1}$. 

For $0\le t < \Aa_\mu^{-1}$ we may assume that $\mu(\{0\})=0$; otherwise $\Aa_\mu=\infty$ and $[0,\Aa_\mu^{-1})$ would be empty. 
We already know that $\Theta(\mu)([0,\Aa_\mu^{-1}])=0$. Therefore, the non-decreasing function $t\mapsto \Theta(\mu)([0,t])$ must be identically $0$ on $[0,\Aa_\mu^{-1}]$. 
\end{proof}

\begin{thm}\label{eq:free_hom} The mapping $\Theta$ in \eqref{eq:Theta} is a homomorphism from the semigroup $(\PM, \boxplus)$ to $(\PM,\fmax)$.\end{thm}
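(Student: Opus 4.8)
The plan is to prove the stated identity at the level of distribution functions, combining Proposition~\ref{prop:key} with the analytic characterization $(\rho\fmax\sigma)([0,t])=\max\{\rho([0,t])+\sigma([0,t])-1,\,0\}$ of free max-convolution recalled in Section~\ref{sec2}. Fixing $\mu,\nu\in\PM$ and writing $\lambda:=\mu\boxplus\nu$, the goal is to check
\[
\Theta(\lambda)([0,t])=\max\{\Theta(\mu)([0,t])+\Theta(\nu)([0,t])-1,\,0\},\qquad t\ge0.
\]
The structural input beyond Proposition~\ref{prop:key} is the inequality $\Aa_\lambda\le\Aa_\mu$, $\Aa_\lambda\le\Aa_\nu$ proved in Section~\ref{sec2}, which gives $\Aa_\lambda^{-1}\ge\max\{\Aa_\mu^{-1},\Aa_\nu^{-1}\}$ (with the convention $\Aa^{-1}=0$ when $\Aa=\infty$). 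I would then split $[0,\infty)$ into the three regimes cut out by the points $\Aa_\mu^{-1}$, $\Aa_\nu^{-1}$ and $\Aa_\lambda^{-1}$.

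On the top regime $t\ge\Aa_\lambda^{-1}$ one automatically has $t\ge\Aa_\mu^{-1}$ and $t\ge\Aa_\nu^{-1}$, so Proposition~\ref{prop:key} rewrites all three quantities through $R$-transforms, and the additivity $R_\lambda=R_\mu+R_\nu$ of \eqref{eq:R} on $(-\Aa_\lambda,0)$ — extended to the endpoint $-\Aa_\lambda$ by continuity of the extended homeomorphisms from Section~\ref{sec2} — immediately gives $1+R_\lambda(-t^{-1})=\bigl[1+R_\mu(-t^{-1})\bigr]+\bigl[1+R_\nu(-t^{-1})\bigr]-1$; since the left-hand side equals $\Theta(\lambda)([0,t])\ge0$, the maximum with $0$ is redundant and the identity holds. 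On the bottom regime $0\le t<\max\{\Aa_\mu^{-1},\Aa_\nu^{-1}\}$, at least one of $\Theta(\mu)([0,t]),\Theta(\nu)([0,t])$ vanishes by Proposition~\ref{prop:key} while the other is $\le1$, so the right-hand side is $0$; and $t<\Aa_\lambda^{-1}$ forces $\Theta(\lambda)([0,t])=0$ as well.

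The substantive case — which I expect to be the only real obstacle — is the middle regime $\max\{\Aa_\mu^{-1},\Aa_\nu^{-1}\}\le t<\Aa_\lambda^{-1}$, which is nonempty precisely when $\Aa_\lambda<\infty$. There Proposition~\ref{prop:key} gives $\Theta(\lambda)([0,t])=0$, whereas $\Theta(\mu)([0,t])+\Theta(\nu)([0,t])-1=1+R_\mu(-t^{-1})+R_\nu(-t^{-1})$, and one must show this is $\le0$, i.e.\ that although $\Theta(\mu)$ and $\Theta(\nu)$ have already ``switched on'', their superposition has not yet reached total mass $1$. The key observation is that $\Aa_\lambda<\infty$ forces $\lambda(\{0\})=0$, hence $R_\lambda(-\Aa_\lambda)=-1+\lambda(\{0\})=-1$; then, using the endpoint form of \eqref{eq:R} together with the monotonicity $R_\mu'>0$, $R_\nu'>0$ from Section~\ref{sec2} and the fact that $-t^{-1}<-\Aa_\lambda$ with $-t^{-1}$ and $-\Aa_\lambda$ both lying in the domains $[-\Aa_\mu,0]$ and $[-\Aa_\nu,0]$ of the extended transforms (guaranteed by $t\ge\max\{\Aa_\mu^{-1},\Aa_\nu^{-1}\}$ and $\Aa_\lambda\le\Aa_\mu,\Aa_\nu$), one obtains
\[
R_\mu(-t^{-1})+R_\nu(-t^{-1})\le R_\mu(-\Aa_\lambda)+R_\nu(-\Aa_\lambda)=R_\lambda(-\Aa_\lambda)=-1,
\]
which is exactly what is needed. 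Assembling the three regimes yields $\Theta(\lambda)=\Theta(\mu)\fmax\Theta(\nu)$, and since $\Theta(\mu)\in\PM$ for every $\mu\in\PM$ (the function in Proposition~\ref{prop:key} being a genuine distribution function), this establishes the claimed semigroup homomorphism.
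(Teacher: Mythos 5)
Your proposal is correct and follows essentially the same route as the paper: Proposition~\ref{prop:key}, the inequality $\Aa_{\mu\boxplus\nu}\le\Aa_\mu,\Aa_\nu$, additivity of the $R$-transform for $t\ge\Aa_{\mu\boxplus\nu}^{-1}$, and a monotonicity argument pivoting on the endpoint $\Aa_{\mu\boxplus\nu}^{-1}$ where $\Theta(\mu\boxplus\nu)$ still has mass $0$. The only difference is cosmetic: the paper treats everything below $\Aa_{\mu\boxplus\nu}^{-1}$ in one stroke by monotonicity of $u\mapsto\Theta(\mu)([0,u])+\Theta(\nu)([0,u])$ and the vanishing at the endpoint, whereas you split off a middle regime and rephrase the same bound through monotonicity of $R_\mu$, $R_\nu$ and the endpoint value $R_{\mu\boxplus\nu}(-\Aa_{\mu\boxplus\nu})=-1$.
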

\begin{proof} 
The goal is to show that for any $\mu,\nu\in \PM$,
\begin{align}\label{goal}
\Theta(\mu\boxplus\nu)([0,t]) = \max\{ \Theta(\mu)([0,t]) + \Theta(\nu)([0,t])-1,0\}, \qquad t\ge0. 
\end{align}

\vspace{2mm}
\noindent
{\bf Case 1:} $t\ge \Aa_{\mu\boxplus\nu}^{-1}$. By Proposition \ref{prop:key} and the general fact $\Aa_{\mu\boxplus \nu}^{-1}\ge \Aa_\mu^{-1},\Aa_\nu^{-1}$, we have
\begin{equation}\label{eq:sum}
 \Theta(\mu\boxplus\nu)([0,t]) =  \Theta(\mu)([0,t]) +  \Theta(\nu)([0,t])-1. 
\end{equation}
 The equation \eqref{goal} thus holds for all $t\ge \Aa_{\mu\boxplus\nu}^{-1}$.

We have nothing to prove anymore if $\Aa_{\mu\boxplus\nu}=\infty$, so we may assume that $\Aa_{\mu\boxplus\nu}<\infty$ below. This in particular implies that $\Theta(\mu\boxplus\nu)([0,\Aa_{\mu\boxplus\nu}^{-1}])=0$ by Proposition \ref{prop:key}.  

\vspace{2mm}
\noindent
{\bf Case 2:} $0 \le t < \Aa_{\mu\boxplus \nu}^{-1}$.  
 We have $\Theta(\mu\boxplus\nu)([0,t])=0$ by Proposition \ref{prop:key}. Since $u\mapsto\Theta(\mu)([0,u]) + \Theta(\nu)([0,u])$ is non-decreasing on $\R_+$ and $t <  \Aa_{\mu\boxplus\nu}^{-1}$, we have
\begin{align*}
\Theta(\mu)([0,t]) + \Theta(\nu)([0,t])-1
&\le \Theta(\mu)([0,\Aa_{\mu\boxplus\nu}^{-1}]) + \Theta(\nu)([0,\Aa_{\mu\boxplus\nu}^{-1}])-1 \\
&= \Theta(\mu\boxplus\nu)([0,\Aa_{\mu\boxplus\nu}^{-1}]) \\
&=0, 
\end{align*}
where \eqref{eq:sum} was used on the second line. This implies \eqref{goal}. 
\end{proof}

Note that $\MP^{-1}$ coincides with the free stable law $\fs_{1/2}$, so we may write 
\begin{equation}\label{eq:Theta2}
\Theta(\mu) = \Phi(\fs_{1/2}\boxtimes \mu), 
\end{equation}
which is to be compared with \eqref{eq:Xi2}. 

A version of Theorem \ref{eq:free_hom} holds in the setting of partially defined free convolution semigroups. Recall that each $\mu \in \PM$ associates the partially defined free convolution semigroup $\{\mu^{\boxplus t}\}_{t\ge1} \subset \PM$ such that $R_{\mu^{\boxplus t}}=tR_\mu$ on their common domain, see \cite[Corollary 14.13]{NS06} and \cite[Theorem 2.5]{BB04}.

\begin{prop}\label{prop:free_hom} Let $t\ge1$ and $\mu$ be a probability measure on $\PM$. Then 
\[
\Theta(\mu^{\boxplus t}) = \Theta(\mu)^{\fmaxxx t}. 
\]
\end{prop}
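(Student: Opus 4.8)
The plan is to imitate the proof of Theorem~\ref{eq:free_hom}, using Proposition~\ref{prop:key} together with the semigroup property $R_{\mu^{\boxplus t}} = t R_\mu$. First I would record the distribution function of $\Theta(\mu^{\boxplus t})$ from Proposition~\ref{prop:key}: for $s \ge \Aa_{\mu^{\boxplus t}}^{-1}$ we have $\Theta(\mu^{\boxplus t})([0,s]) = 1 + R_{\mu^{\boxplus t}}(-s^{-1}) = 1 + t R_\mu(-s^{-1})$, and it is $0$ for $0 \le s < \Aa_{\mu^{\boxplus t}}^{-1}$. On the other hand, the $t$-fold free max-convolution power is characterized (by iterating the two-fold formula in Section~\ref{sec2}, or rather by its continuous-parameter version) by
\[
\Theta(\mu)^{\fmaxxx t}([0,s]) = \max\{ t\, \Theta(\mu)([0,s]) - (t-1),\, 0\}, \qquad s \ge 0,
\]
so I need to match $1 + t R_\mu(-s^{-1})$ with $t(1 + R_\mu(-s^{-1})) - (t-1) = 1 + tR_\mu(-s^{-1})$ on the appropriate range, which is an immediate algebraic identity.

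The only real content is therefore the comparison of the two cutoff points, i.e.\ showing that $\Aa_{\mu^{\boxplus t}}^{-1}$ is exactly the place where $t\,\Theta(\mu)([0,s]) - (t-1)$ changes sign, and that $\Theta(\mu^{\boxplus t})$ vanishes below that point. For the free max-convolution side, the sign change of $s \mapsto t(1 + R_\mu(-s^{-1})) - (t-1)$ occurs where $R_\mu(-s^{-1}) = -1 + 1/t$; since $R_\mu$ maps $(-\Aa_\mu, 0)$ onto $(-1 + \mu(\{0\}), 0)$ increasingly, this equation has a (unique) solution precisely when $-1 + 1/t > -1 + \mu(\{0\})$, i.e.\ $\mu(\{0\}) < 1/t$, and then the solution $s_0$ satisfies $-s_0^{-1} = R_\mu^{-1}(-1 + 1/t)$. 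For the $\Theta(\mu^{\boxplus t})$ side, $\Aa_{\mu^{\boxplus t}}^{-1}$ is characterized through $R_{\mu^{\boxplus t}} = tR_\mu$ having range $(-1 + \mu^{\boxplus t}(\{0\}), 0)$ on $(-\Aa_{\mu^{\boxplus t}}, 0)$; combined with the atom formula $\mu^{\boxplus t}(\{0\}) = \max\{1 - t(1 - \mu(\{0\})), 0\}$ (the $t$-fold analogue of the boolean-type jump, valid for free additive convolution powers), one sees the two cutoffs coincide. I would handle the degenerate case $\mu(\{0\}) \ge 1/t$ separately, where both sides are supported away from $0$ only trivially and the distribution functions agree by the same limiting argument as in Case~2 of Theorem~\ref{eq:free_hom}.

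With the cutoff points identified, the proof closes exactly as before: on $s \ge \Aa_{\mu^{\boxplus t}}^{-1}$ the two distribution functions agree by the algebraic identity above; on $0 \le s < \Aa_{\mu^{\boxplus t}}^{-1}$ the left side is $0$ by Proposition~\ref{prop:key} while the right side is $\max\{\cdots,0\} = 0$ because $t\Theta(\mu)([0,s]) - (t-1)$ is non-decreasing in $s$ and, by continuity and \eqref{eq:sum}-type reasoning, is $\le 0$ at $s = \Aa_{\mu^{\boxplus t}}^{-1}$. Since two probability measures on $\R_+$ with the same distribution function coincide, this gives $\Theta(\mu^{\boxplus t}) = \Theta(\mu)^{\fmaxxx t}$.

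The main obstacle I anticipate is not any single estimate but the bookkeeping around atoms at $0$: one must be careful that the free convolution power $\mu^{\boxplus t}$ exists for all $t \ge 1$ (this is the partially defined semigroup recalled just before the statement), that its atom at $0$ obeys $\mu^{\boxplus t}(\{0\}) = \max\{1 - t(1 - \mu(\{0\})), 0\}$, and that the formula $R_{\mu^{\boxplus t}} = tR_\mu$ indeed extends to the full interval $(-\Aa_{\mu^{\boxplus t}}, 0)$ — the same subtlety that made the inequality $\Aa_{\mu \boxplus \nu} \le \Aa_\mu, \Aa_\nu$ nontrivial in Section~\ref{sec2}. Once these facts about $\Aa_{\mu^{\boxplus t}}$ and $\mu^{\boxplus t}(\{0\})$ are pinned down, everything else is a direct transcription of the proof of Theorem~\ref{eq:free_hom} with $\nu$ replaced by the continuous parameter $t$.
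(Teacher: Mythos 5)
Your argument is a genuinely different proof from the paper's. The paper explicitly sidesteps the direct route and instead chains known identities: writing $\Theta(\mu)=\Phi(\fs_{1/2}\boxtimes\mu)$, it uses the stability relation $\fs_{1/2}^{\boxplus t}=D_{t^2}(\fs_{1/2})$, the Belinschi--Nica distributivity $(\mu\boxtimes\nu)^{\boxplus t}=D_{1/t}((\mu^{\boxplus t})\boxtimes(\nu^{\boxplus t}))$, and Ueda's identity \eqref{eq:Ueda}, so the proposition becomes a four-line computation that moreover exhibits how the homomorphism property of $\Theta$ is tied to \eqref{eq:Ueda}. Yours is exactly the ``natural proof'' the paper mentions and omits: rerun Theorem~\ref{eq:free_hom} with the continuous parameter, using Proposition~\ref{prop:key}, the relation $R_{\mu^{\boxplus t}}=tR_\mu$, and $\Theta(\mu)^{\fmaxxx t}([0,s])=\max\{t\,\Theta(\mu)([0,s])-(t-1),0\}$. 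That route is legitimate and more self-contained in spirit, but its price is precisely the bookkeeping you flag: the inequality $\Aa_{\mu^{\boxplus t}}\le\Aa_\mu$, the validity of $R_{\mu^{\boxplus t}}=tR_\mu$ on all of $(-\Aa_{\mu^{\boxplus t}},0)$, and the atom formula $\mu^{\boxplus t}(\{0\})=\max\{t\mu(\{0\})-(t-1),0\}$ from \cite{BB04}, none of which the paper's proof needs. (For $\Aa_{\mu^{\boxplus t}}\le\Aa_\mu$ the operator argument of Section~\ref{sec2} is unavailable for non-integer $t$, but a range argument works: if $\Aa_\mu<\Aa_{\mu^{\boxplus t}}$ with $\Aa_\mu<\infty$, then $R_{\mu^{\boxplus t}}=tR_\mu$ on the common domain would take values arbitrarily close to $-t<-1$, contradicting $R_{\mu^{\boxplus t}}((-\Aa_{\mu^{\boxplus t}},0))\subseteq(-1,0)$.)

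One concrete correction is needed in your cutoff analysis. Since $t\bigl(1+R_\mu(-s^{-1})\bigr)-(t-1)=1+tR_\mu(-s^{-1})$, the sign change occurs where $R_\mu(-s^{-1})=-1/t$, not at $-1+1/t$; accordingly the equation is solvable in the range of $R_\mu$ precisely when $-1/t>-1+\mu(\{0\})$, i.e.\ $\mu(\{0\})<1-1/t$, not $\mu(\{0\})<1/t$. With your values the claimed match against the atom formula does not actually check out, because that formula gives $\mu^{\boxplus t}(\{0\})>0$ exactly when $\mu(\{0\})>1-1/t$. With the corrected threshold everything is consistent: if $\mu(\{0\})\le 1-1/t$ and $\Aa_{\mu^{\boxplus t}}<\infty$, then $R_{\mu^{\boxplus t}}$ maps $(-\Aa_{\mu^{\boxplus t}},0)$ onto $(-1,0)$, so $tR_\mu(-\Aa_{\mu^{\boxplus t}})=-1$, i.e.\ $R_\mu(-\Aa_{\mu^{\boxplus t}})=-1/t$, which is exactly the max-side sign change; if instead $\mu(\{0\})>1-1/t$, then $R_\mu>-1/t$ on its whole domain and $\mu^{\boxplus t}(\{0\})>0$, so both truncations are vacuous and the two distribution functions agree everywhere. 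Your closing step (vanishing of both sides below the cutoff, by monotonicity and the value $0$ at $s=\Aa_{\mu^{\boxplus t}}^{-1}$) then goes through exactly as in Case~2 of Theorem~\ref{eq:free_hom}.
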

\begin{proof}
A natural proof would be to follow the lines of Theorem \ref{eq:free_hom}, but here we provide an alternative proof by exploiting several known identities:   
 \begin{align}
 \Theta(\mu^{\boxplus t}) 
 &= \Phi(\fs_{1/2} \boxtimes (\mu^{\boxplus t})) \notag \\ 
 &= \Phi(D_{t^{-2}} ((\fs_{1/2}^{\boxplus t}) \boxtimes (\mu^{\boxplus t}))) \label{eq:stability}\\ 
 &=  \Phi(D_{t^{-1}}(\fs_{1/2} \boxtimes \mu)^{\boxplus t}) \label{eq:plus-times}\\
 &=  \Phi(\fs_{1/2} \boxtimes \mu)^{\fmaxxx t} \label{eq:ueda} \\
 &= \Theta(\mu)^{\fmaxxx t}, \notag
 \end{align}
where the stability condition $\fs_{1/2}^{\boxplus t} = D_{t^2}(\fs_{1/2})$, the distributive relation $(\mu \boxtimes \nu)^{\boxplus t} = D_{1/t}((\mu^{\boxplus t}) \boxtimes (\nu^{\boxplus t}))$ \cite[Proposition 3.5]{BN08} and \eqref{eq:Ueda} are used in the steps \eqref{eq:stability}, \eqref{eq:plus-times} and  \eqref{eq:ueda}, respectively. 
\end{proof}
The proof above shows how the homomorphism property of $\Theta$ is related to the first identity in \eqref{eq:Ueda}.

\begin{exam} Let $\alpha \in(0,1)$. Recall from Example \ref{ex:free_stable} that $R_{\fs_\alpha}(t) = -(-t)^\alpha$ on $(-1,0)$ with $\Aa_{\fs_\alpha}=1$. Then Proposition \ref{prop:key} yields that 
\[
\Theta(\fs_\alpha)([0,t])=\Phi(\MP^{-1} \boxtimes \fs_\alpha)([0,t]) = (1-t^{-\alpha})_+, \qquad t \ge0,  
\]
so that $\Theta(\fs_\alpha)$ is the Pareto distribution, which is a free extreme value distribution \cite[Definition 6.7]{BAV06}. It also coincides with $\uni^{-\frac{1}{\alpha}}$, where $\uni$ is the uniform distribution on $(0,1)$ as observed in \cite[Remark 6]{HSW20}.  
\end{exam}

\subsection{The boolean case} \label{sec3B}


In order to prove that the mapping $\Xi$ defined in \eqref{eq:Xi} is a homomorphism, we start by characterizing the distribution function of  $\Xi(\mu)$.

\begin{lem}\label{lem:atom}
Let $\mu \in \PM$. Then $\Aa_{\MP\boxtimes\mu}=\infty$. 
\end{lem}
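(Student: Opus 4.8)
\textbf{Proof proposal for Lemma \ref{lem:atom}.}

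The plan is to reduce the claim $\Aa_{\MP\boxtimes\mu}=\infty$ to a statement about the behaviour of the $S$-transform, or equivalently the $R$-transform, of $\MP\boxtimes\mu$ near the endpoint of its domain. Recall from the excerpt that $\Aa_\nu=\infty$ precisely when $R_\nu$ maps $(-\infty,0)$ onto $(-1+\nu(\{0\}),0)$ with no room to extend, i.e.\ when the range of $R_\nu$ is $(-1+\nu(\{0\}),0)$ and $-\Aa_\nu^{-1}$ is "at infinity"; more concretely, $\Aa_\nu<\infty$ would force $\nu(\{0\})=0$ and the compositional inverse $R_\nu^{-1}$ to be defined on $(-1,0)$ with $R_\nu^{-1}(u)\to -\Aa_\nu$ as $u\uparrow 0$ being finite. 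So the first step is to translate everything into the $S$-transform: by \eqref{eq:atoms} we have $(\MP\boxtimes\mu)(\{0\})=\mu(\{0\})$, and by \eqref{eq:S}, $S_{\MP\boxtimes\mu}(u)=S_\MP(u)S_\mu(u)=\frac{1}{1+u}S_\mu(u)$ on $(-1+\mu(\{0\}),0)$.

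Next I would use the relation $R_\nu^{-1}(u)=uS_\nu(u)$ to express $\Aa_{\MP\boxtimes\mu}$ through $S_{\MP\boxtimes\mu}$. If $\mu(\{0\})>0$ then $\Aa_{\MP\boxtimes\mu}=\infty$ trivially by the convention in \eqref{eq:negative_moment}, so assume $\mu(\{0\})=0$. Then $\Aa_{\MP\boxtimes\mu}<\infty$ would require that $R_{\MP\boxtimes\mu}$ maps some bounded interval $(-\Aa_{\MP\boxtimes\mu},0)$ onto $(-1,0)$, equivalently that $R_{\MP\boxtimes\mu}^{-1}(u)=u\,S_{\MP\boxtimes\mu}(u)=\frac{u}{1+u}S_\mu(u)$ stays bounded away from $-\infty$ as $u\uparrow 0$. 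But $S_\mu$ is finite and positive at $u$ close to $0$ (it is a homeomorphism onto a subinterval of $(0,\infty)$-type range via $R_\mu^{-1}$), so $\frac{u}{1+u}S_\mu(u)\to 0$ as $u\uparrow 0$; this does \emph{not} by itself give $\infty$. The actual obstruction is at the \emph{left} endpoint: I need to show $R_{\MP\boxtimes\mu}$ is defined (univalently, as the genuine $R$-transform coming from the Cauchy transform) all the way down to $-\infty$, i.e.\ that $\Aa_{\MP\boxtimes\mu}=\infty$ means the domain $(-\Aa_{\MP\boxtimes\mu},0)$ of $R_{\MP\boxtimes\mu}$ is all of $(-\infty,0)$. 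Equivalently, I must check that $u\mapsto \frac{u}{1+u}S_\mu(u)$, as $u$ ranges over $(-1,0)$, has range all of $(-\infty,0)$, i.e.\ that $R_{\MP\boxtimes\mu}^{-1}$ hits every value in $(-\infty,0)$. As $u\uparrow 0$ this tends to $0$; as $u\downarrow -1$, the factor $\frac{u}{1+u}\to -\infty$ while $S_\mu(u)\to S_\mu(-1+)\in(0,\infty]$ (using $\mu(\{0\})=0$, so the domain of $S_\mu$ is $(-1,0)$), hence the product $\to -\infty$. By continuity the range is all of $(-\infty,0)$, which forces $\Aa_{\MP\boxtimes\mu}=\infty$.

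So the key steps, in order, are: (1) dispose of the case $\mu(\{0\})>0$ by the convention, hence assume $\mu(\{0\})=0$ and note $(\MP\boxtimes\mu)(\{0\})=0$ by \eqref{eq:atoms}; (2) write $S_{\MP\boxtimes\mu}(u)=\frac{1}{1+u}S_\mu(u)$ and $R_{\MP\boxtimes\mu}^{-1}(u)=uS_{\MP\boxtimes\mu}(u)=\frac{u}{1+u}S_\mu(u)$ on $(-1,0)$; (3) analyse the endpoint behaviour: the product tends to $0$ as $u\uparrow 0$ and to $-\infty$ as $u\downarrow-1$ because $\frac{u}{1+u}\to-\infty$ and $S_\mu$ stays positive; (4) conclude by monotonicity/continuity of $R_{\MP\boxtimes\mu}^{-1}$ that its range is $(-\infty,0)$, so $R_{\MP\boxtimes\mu}$ is defined on all of $(-\infty,0)$, which by the characterisation recalled in Section \ref{sec2} means $\Aa_{\MP\boxtimes\mu}=\infty$. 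The main obstacle I anticipate is step (3)--(4): one must be careful that the composition $S_\MP S_\mu$ actually coincides with the genuine $S$-transform $S_{\MP\boxtimes\mu}$ on the \emph{whole} interval $(-1,0)$ (not merely an analytic continuation $\tS$), and that the monotonicity of $R_{\MP\boxtimes\mu}$ established in Section \ref{sec2} is available — this is exactly where \eqref{eq:atoms} giving $(\MP\boxtimes\mu)(\{0\})=0$ is used to pin down the domain as $(-1,0)$. An alternative, perhaps cleaner, route would be operator-theoretic: realise $\MP\boxtimes\mu$ as the distribution of $X^{1/2}YX^{1/2}$ with $X$ distributed as $\MP$ and $Y$ as $\mu$, free; then $\Aa_{\MP\boxtimes\mu}=\tau[(X^{1/2}YX^{1/2})^{-1}]=\tau[X^{-1/2}Y^{-1}X^{-1/2}]$, and since $X$ has the arcsine-type density of $\MP$ which is $\sim c/\sqrt{x}$ near $0$, the integral $\int x^{-1}\,d\MP(x)$ already diverges, forcing $\tau[X^{-1}]=\infty$ and, via a Cauchy--Schwarz or positivity argument, $\tau[(X^{1/2}YX^{1/2})^{-1}]=\infty$ as well.
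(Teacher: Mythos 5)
Your proof is correct, but it takes a different computational route from the paper. The paper's proof is a two-line argument: it invokes the free compound Poisson identity $\tR_{\MP\boxtimes\mu}=\psi_\mu$ (the paper's \eqref{eq:CFP}, cited from \cite[Proposition 12.18]{NS06}), notes that $\psi_\mu$ maps $(-\infty,0)$ bijectively onto $(-1+\mu(\{0\}),0)=(-1+(\MP\boxtimes\mu)(\{0\}),0)$ by \eqref{eq:atoms}, and concludes $\Aa_{\MP\boxtimes\mu}=\infty$ because that interval is exactly the range of the genuine $R$-transform $\FC_{\MP\boxtimes\mu}$. You instead work with $S$-transforms: after disposing of the atomic case via \eqref{eq:atoms} and the convention in \eqref{eq:negative_moment}, you write $R_{\MP\boxtimes\mu}^{-1}(u)=uS_\MP(u)S_\mu(u)=\frac{u}{1+u}S_\mu(u)$ on $(-1,0)$ and show this tends to $-\infty$ as $u\downarrow -1$ (using $S_\mu(-1^+)=\Aa_\mu\in(0,\infty]$, which indeed follows since $R_\mu^{-1}$ has range $(-\Aa_\mu,0)$), so that the range of $R_{\MP\boxtimes\mu}^{-1}$ is all of $(-\infty,0)$. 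This is sound, given that the paper's Section \ref{sec2} guarantees both the multiplicativity of $S$ on the whole common domain $(-1,0)$ and the monotone-bijection structure of $R_{\MP\boxtimes\mu}$, the two points you rightly flag as the places needing care. Note that by the alternative definition $S_\mu(u)=\frac{1+u}{u}\psi_\mu^{-1}(u)$ your key quantity equals $\psi_\mu^{-1}(u)$, so your endpoint analysis is in effect a re-derivation of the inverse of the identity \eqref{eq:CFP}; the paper's citation buys brevity and feeds directly into Proposition \ref{prop:key2}, while your argument is self-contained within the transform toolkit of Section \ref{sec2}. Your closing operator-theoretic sketch ($\tau[X^{-1}]=\Aa_\MP=\infty$ plus freeness/traciality) is a plausible third route but would need care with unbounded operators and the case $\mu(\{0\})>0$; it is not needed for your main argument.
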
 
\begin{proof} Recall that $\tR_{\MP\boxtimes\mu}(t) = \psi_\mu(t)$ for $t \in (-\infty,0)$. If $\mu=\delta_0$ then the conclusion is obvious, so assume that $\mu \neq \delta_0$. Then $\tR_{\MP\boxtimes\mu}$ bijectively maps $(-\infty,0)$ onto $(-1+\mu(\{0\}),0) = (-1+(\MP\boxtimes\mu)(\{0\}),0)$, where \eqref{eq:atoms} was used. Since the last interval is the range of $\FC_{\MP\boxtimes\mu}$, we conclude that $\Aa_{\MP\boxtimes\mu}=\infty$. 
\end{proof}
\begin{prop}
\label{prop:key2}
Consider $\mu\in \PM$. Then 
\[
\Xi(\mu)([0,t]) = 1 + \psi_\mu(-t^{-1}), \qquad t \ge 0,
\]
where $\psi_\mu(-0^{-1}):=\lim_{t\to-\infty}\psi_\mu(t)=-1+\mu(\{0\})$.
\end{prop}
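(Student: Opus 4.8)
The plan is to mimic the proof of Proposition~\ref{prop:key}, but with $\MP^{-1}\boxtimes\mu$ replaced by $\MP^{-1}\boxtimes\MP\boxtimes\mu$, and to exploit two simplifications that are special to the boolean setting: first, $\Aa_{\MP\boxtimes\mu}=\infty$ by Lemma~\ref{lem:atom}, which means $\MP^{-1}\boxtimes\MP\boxtimes\mu$ never loses the full range $(-1,0)$ for its $R$-transform, so there is no degenerate interval $[0,\Aa^{-1})$ to worry about; second, the $S$-transform of $\MP\boxtimes\mu$ telescopes nicely against $S_{\MP^{-1}}$.

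First I would invoke the Haagerup--Möller characterization \eqref{eq:HM} of $\Phi$ applied to the measure $\rho:=\MP^{-1}\boxtimes\MP\boxtimes\mu$: for $u\in(\rho(\{0\}),1)$ one has $\Xi(\mu)\big([0,1/S_\rho(u-1)]\big)=u$. By \eqref{eq:atoms}, $\rho(\{0\})=\max\{\MP^{-1}(\{0\}),(\MP\boxtimes\mu)(\{0\})\}=0$ since $\MP$ and $\MP^{-1}$ have no atom at $0$; so the characterization is valid for all $u\in(0,1)$, and $\Xi(\mu)$ itself has no atom at $0$. Next, using $S_{\MP}(u)=1/(u+1)$, the Haagerup--Schultz inversion formula $S_{\MP^{-1}}(u)=1/S_{\MP}(-u-1)=-u$ on $(-1,0)$ (exactly as in the proof of Proposition~\ref{prop:key}), and \eqref{eq:S}, I compute
\[
\frac{1}{S_\rho(u-1)}=\frac{1}{S_{\MP^{-1}}(u-1)\,S_{\MP}(u-1)\,S_\mu(u-1)}=\frac{-1}{(u-1)\cdot\frac{1}{u}\cdot S_\mu(u-1)}=\frac{-u}{(u-1)S_\mu(u-1)}.
\]
Now I recall the alternative definition $S_\mu(t)=\frac{1+t}{t}\psi_\mu^{-1}(t)$, so that $(u-1)S_\mu(u-1)=\frac{u}{u-1}\cdot(u-1)\,\psi_\mu^{-1}(u-1)\cdot\frac{1}{u}$; more directly, writing $w=u-1\in(-1,0)$, one gets $S_\mu(w)=\frac{1+w}{w}\psi_\mu^{-1}(w)=\frac{u}{u-1}\psi_\mu^{-1}(u-1)$, hence $(u-1)S_\mu(u-1)=u\,\psi_\mu^{-1}(u-1)$ and therefore $1/S_\rho(u-1)=-1/\psi_\mu^{-1}(u-1)$.

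Finally I substitute $u=1+\psi_\mu(-1/t)$, which for $t\in(0,\infty)$ ranges over $(1+(-1+\mu(\{0\})),1)\subseteq(0,1)$ since $\psi_\mu\colon(-\infty,0)\to(-1+\mu(\{0\}),0)$ is an increasing bijection; then $\psi_\mu^{-1}(u-1)=-1/t$, so $1/S_\rho(u-1)=t$, and the characterization gives $\Xi(\mu)([0,t])=u=1+\psi_\mu(-1/t)$ for all $t>0$. Taking $t\downarrow0$, i.e.\ $-1/t\to-\infty$, gives $\Xi(\mu)(\{0\})=1+\lim_{s\to-\infty}\psi_\mu(s)=\mu(\{0\})$, which matches the stated convention $\psi_\mu(-0^{-1})=-1+\mu(\{0\})$ and also confirms consistency at $t=0$ (where $[0,0]=\{0\}$). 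I expect the only genuinely delicate point to be bookkeeping the $S$-transform domains and the atom at $0$: one must check that $\rho(\{0\})=0$ so that \eqref{eq:HM} applies on all of $(0,1)$, and that the substitution $u=1+\psi_\mu(-1/t)$ stays in the valid range for every $t\ge0$, including the boundary behavior as $t\to0$ and as $t\to\infty$; the algebraic telescoping of the three $S$-transforms is routine once the Haagerup--Schultz formula and the $\psi_\mu^{-1}$ form of $S_\mu$ are in hand.
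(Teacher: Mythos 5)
Your proposal is correct in substance but takes a different route than the paper: the paper just observes $\Xi(\mu)=\Theta(\MP\boxtimes\mu)$ (cf.\ \eqref{eq:Xi3}) and applies Proposition \ref{prop:key} to $\MP\boxtimes\mu$, quoting the identity $\tR_{\MP\boxtimes\mu}=\psi_\mu$ from \eqref{eq:CFP} together with Lemma \ref{lem:atom} to see that the domain is all of $(-\infty,0)$. You instead rerun the Haagerup--M\"oller characterization \eqref{eq:HM} directly on $\rho=\MP^{-1}\boxtimes\MP\boxtimes\mu$ and telescope the three $S$-transforms, with the $\psi_\mu^{-1}$ form of $S_\mu$ doing the work that \eqref{eq:CFP} does in the paper; your algebra $1/S_\rho(u-1)=-1/\psi_\mu^{-1}(u-1)$ and the substitution $u=1+\psi_\mu(-1/t)$ are correct, so you in effect re-derive \eqref{eq:CFP} at the level of inverse functions instead of citing it. This is a legitimate, self-contained alternative, at the cost of not exploiting the short reduction the paper set up.

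One intermediate claim is wrong, though it turns out to be harmless as you never really use it: $\rho(\{0\})$ is not $0$ in general. By \eqref{eq:atoms} applied twice, $\rho(\{0\})=\max\{\MP^{-1}(\{0\}),\MP(\{0\}),\mu(\{0\})\}=\mu(\{0\})$, and the absence of atoms of $\MP$ and $\MP^{-1}$ does not help when $\mu(\{0\})>0$. Hence ``the characterization is valid for all $u\in(0,1)$'' and ``$\Xi(\mu)$ has no atom at $0$'' are false in that case --- the latter even contradicts your own concluding computation $\Xi(\mu)(\{0\})=\mu(\{0\})$, which is the correct statement. The argument survives because your substitution $u=1+\psi_\mu(-1/t)$ takes values exactly in $(\mu(\{0\}),1)=(\rho(\{0\}),1)$, the genuine range of validity of \eqref{eq:HM}, and because $-1/\psi_\mu^{-1}(u-1)$ sweeps all of $(0,\infty)$ as $u$ runs over that interval, so every $t>0$ is reached. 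Fix the atom statement accordingly, and note (as the paper also leaves implicit) that $\mu=\delta_0$ needs a separate trivial check since $S_{\delta_0}$ and \eqref{eq:HM} are not available there, while $\Xi(\delta_0)=\delta_0$ and $\psi_{\delta_0}\equiv0$ make the formula obvious.
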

\begin{proof} 
It suffices to replace $\mu$ with $\MP\boxtimes \mu$ in Proposition \ref{prop:key} and use the identity \cite[Proposition 12.18]{NS06}
\begin{equation}\label{eq:CFP}
\tR_{\MP\boxtimes \mu}(t) = \psi_\mu(t),  
\end{equation}
together with Lemma \ref{lem:atom} which entails that $\tR_{\MP\boxtimes \mu}$ and $\FC_{\MP\boxtimes \mu}$ have the same domain $(-\infty,0)$. 
\end{proof}

\begin{thm}\label{thm:boole_hom} The mapping $\Xi$ in \eqref{eq:Xi} is a homomorphism from the semigroup $(\PM, \uplus)$ to $(\PM,\bmax)$.\end{thm}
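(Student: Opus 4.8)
The plan is to mirror the structure of Theorem \ref{eq:free_hom}, using the explicit distribution-function formula from Proposition \ref{prop:key2} in place of Proposition \ref{prop:key}. Concretely, I want to show that for all $\mu,\nu\in\PM$ and all $t\ge0$,
\[
\Xi(\mu\uplus\nu)([0,t]) = \max\{\Xi(\mu)([0,t]) + \Xi(\nu)([0,t]) - 1, 0\}.
\]
The right-hand side is exactly the distribution function of $\Xi(\mu)\bmax\Xi(\nu)$ by the definition of free max-convolution recalled in Section \ref{sec2}, so establishing this identity is the whole task. By Proposition \ref{prop:key2} the left-hand side equals $1 + \psi_{\mu\uplus\nu}(-t^{-1})$, while the sum appearing on the right equals $\bigl(1+\psi_\mu(-t^{-1})\bigr) + \bigl(1+\psi_\nu(-t^{-1})\bigr) - 1 = 1 + \psi_\mu(-t^{-1}) + \psi_\nu(-t^{-1})$. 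So everything reduces to understanding how $\psi_{\mu\uplus\nu}$ compares with $\psi_\mu + \psi_\nu$.

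The key algebraic input is the linearizing property of the $\eta$-transform: $\eta_{\mu\uplus\nu} = \eta_\mu + \eta_\nu$ on $(-\infty,0)$. Since $\eta_\mu = \psi_\mu/(1+\psi_\mu)$, writing $f = \psi_\mu(-t^{-1})$, $g = \psi_\nu(-t^{-1})$, $h = \psi_{\mu\uplus\nu}(-t^{-1})$, all lying in the interval $[-1,0]$, the relation becomes $\frac{h}{1+h} = \frac{f}{1+f} + \frac{g}{1+g}$. First I would solve this for $h$ and simplify; one finds $h = \frac{f+g+2fg}{1+f+g}$ (valid wherever $1+f+g\ne0$), and hence
\[
1 + h - \bigl(1 + f + g\bigr) = \frac{f+g+2fg}{1+f+g} - (f+g) = \frac{-\,(f+g)^2 + 2fg}{1+f+g} = \frac{-(f^2+g^2)}{1+f+g}.
\]
Thus $\Xi(\mu\uplus\nu)([0,t]) - \bigl(\Xi(\mu)([0,t]) + \Xi(\nu)([0,t]) - 1\bigr) = \frac{-(f^2+g^2)}{1+f+g}$, and I would then argue by a sign/monotonicity analysis that this quantity equals $\Xi(\mu\uplus\nu)([0,t])$ (i.e.\ equals $1+h$) precisely on the range where the max takes the value $0$, and vanishes on the complementary range.

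The sign analysis splits according to the sign of $1+f+g$, equivalently according to whether $t$ is above or below the threshold $t_0 := \inf\{t\ge0 : \psi_\mu(-t^{-1}) + \psi_\nu(-t^{-1}) \ge -1\}$. For $t \ge t_0$ we have $1+f+g\ge0$ and the displayed difference is $\le 0$, so $\Xi(\mu\uplus\nu)([0,t]) \le \Xi(\mu)([0,t]) + \Xi(\nu)([0,t]) - 1$; combined with the reverse inequality coming from the formula for $h$ (or directly from $1+h = 1 + \frac{f+g+2fg}{1+f+g} = \frac{(1+f)(1+g)}{1+f+g} \ge 0$ together with $\frac{-(f^2+g^2)}{1+f+g}\le 0$, which forces the max to be attained by the sum), this gives equality with the sum on this range. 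For $0\le t < t_0$, $h$ is a distribution-function value so $1+h\ge0$; the explicit expression forces $1+h = 0$ there (since $1+h = \frac{(1+f)(1+g)}{1+f+g}$ has a nonpositive denominator while the numerator is $\ge0$, the only consistent value of the genuine distribution function is $0$), matching $\max\{\text{(sum)}-1,0\}=0$ because the sum is then $<1$. The main obstacle, and the point requiring care, is the behavior exactly at $t_0$ and the handling of the degenerate cases $\mu=\delta_0$, $\mu(\{0\})>0$, or $1+f+g=0$ on an interval: here one should use the monotonicity of $t\mapsto \Xi(\mu\uplus\nu)([0,t])$ (it is a distribution function) together with right/left continuity to pin down the value, exactly as in Case 2 of the proof of Theorem \ref{eq:free_hom}, rather than relying on the pointwise algebraic identity which breaks down where the denominator vanishes.
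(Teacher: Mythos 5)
There is a genuine gap, and it is the very first step: you are proving the wrong identity. The theorem asserts that $\Xi$ is a homomorphism into $(\PM,\bmax)$, the \emph{boolean} max-convolution, which by \eqref{eq:sum_boole} is characterized through reciprocals of distribution functions, $\tfrac{1}{(\mu \bmax \nu)([0,t])}=\tfrac{1}{\mu([0,t])}+\tfrac{1}{\nu([0,t])}-1$; the truncated-sum rule $\max\{\mu([0,t])+\nu([0,t])-1,0\}$ that you take as your target is the \emph{free} max-convolution $\fmax$. So the identity you set out to establish is not the statement of the theorem, and it is in fact false in general. Indeed, writing $f=\psi_\mu(-t^{-1})$, $g=\psi_\nu(-t^{-1})$, $h=\psi_{\mu\uplus\nu}(-t^{-1})$, solving $\tfrac{h}{1+h}=\tfrac{f}{1+f}+\tfrac{g}{1+g}$ gives
\[
h=\frac{f+g+2fg}{1-fg},\qquad 1+h=\frac{(1+f)(1+g)}{1-fg}
\]
(your denominator $1+f+g$ is an algebra slip), and $\frac{(1+f)(1+g)}{1-fg}=\max\{1+f+g,\,0\}$ forces $fg(2+f+g)=0$, i.e.\ it holds only when $\mu$ or $\nu$ is $\delta_0$. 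For generic $\mu,\nu$ the two sides differ on whole intervals of $t$, so no sign/monotonicity analysis at a threshold $t_0$ can repair the argument.

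The correct route is shorter than what you attempted: from the display above, $\tfrac{1}{1+h}=\tfrac{1-fg}{(1+f)(1+g)}=\tfrac{1}{1+f}+\tfrac{1}{1+g}-1$, which together with Proposition \ref{prop:key2} (namely $\Xi(\mu)([0,t])=1+\psi_\mu(-t^{-1})$) is exactly the defining relation of $\bmax$ for $\Xi(\mu\uplus\nu)$. Equivalently, as the paper does, observe $\tfrac{1}{1+\psi_\mu(-t^{-1})}=1-\eta_\mu(-t^{-1})$ and use the additivity $\eta_{\mu\uplus\nu}=\eta_\mu+\eta_\nu$ from \eqref{eq:sum_boole}; no case analysis analogous to Theorem \ref{eq:free_hom} is needed, the degenerate values being covered by the convention in \eqref{eq:sum_boole}.
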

\begin{proof}
The goal is to demonstrate that 
\[
\frac{1}{ \Xi(\mu \uplus \nu)([0,t])} =   \frac1{\Xi(\mu)([0,t])} + \frac1{\Xi(\nu)([0,t])} -1.  
\]
Note that by Proposition \ref{prop:key2} 
\[
 \frac1{\Xi(\mu)([0,t])} = \frac1{1 + \psi_\mu(-t^{-1})} = 1-\eta_\mu(-t^{-1}). 
\]
The goal is then achieved from the additivity relation \eqref{eq:sum_boole}. 
\end{proof}

Note that the measure $\MP\boxtimes\MP^{-1}$ is actually the boolean stable law $\bs_{1/2}$, so that we may write 
\begin{equation}\label{eq:Xi2}
\Xi(\mu) = \Phi(\bs_{1/2}\boxtimes \mu), 
\end{equation}
which shows a resemblance with \eqref{eq:Theta2}. Also, the obvious formula 
\begin{equation}\label{eq:Xi3}
\Xi(\mu)=\Theta(\MP\boxtimes\mu)
\end{equation}
relates $\Xi$ and $\Theta$. 

Actually, the mapping $\Xi$ was already discussed in \cite[Theorem 5.4]{AH16a}, where the following formula was obtained. 
\begin{prop} For $\mu \in \PM$ we have 
\[
\Xi(\mu) = \ed \circledast \ed^{-1} \circledast \mu,   
\]
where $\ed$ is the exponential distribution with density $e^{-x} \mathbf1_{(0,\infty)}(x)$. 
\end{prop}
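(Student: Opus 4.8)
The plan is to compute both sides as measures on $\R_+$ and match their distribution functions, using Proposition \ref{prop:key2} for the left-hand side. First I would rewrite that proposition in a more convenient form. Substituting $s=-t^{-1}$ into the definition of $\psi_\mu$ (the integrand vanishing at $u=0$) gives $\psi_\mu(-t^{-1})=-\int_{(0,\infty)}\frac{u}{t+u}\,d\mu(u)$ for $t>0$, so Proposition \ref{prop:key2} becomes
\[
\Xi(\mu)([0,t]) = 1-\int_{(0,\infty)}\frac{u}{t+u}\,d\mu(u) = \mu(\{0\}) + \int_{(0,\infty)}\frac{t}{t+u}\,d\mu(u), \qquad t\ge 0,
\]
where the value at $t=0$ is consistent with the convention $\psi_\mu(-0^{-1})=-1+\mu(\{0\})$. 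It then remains to show that $\ed\circledast\ed^{-1}\circledast\mu$ has exactly this distribution function.

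Next I would identify $\rho:=\ed\circledast\ed^{-1}$ explicitly. Since $\rho$ is the law of $X/Y$ for independent $X,Y$ each with density $e^{-x}\mathbf{1}_{(0,\infty)}(x)$, conditioning on $Y$ yields
\[
\rho([0,s]) = \int_0^\infty (1-e^{-sy})\,e^{-y}\,dy = 1-\frac{1}{1+s} = \frac{s}{1+s},\qquad s\ge 0
\]
(equivalently, $\rho$ has density $(1+s)^{-2}\mathbf{1}_{(0,\infty)}(s)$, and $\rho(\{0\})=0$). Using associativity of $\circledast$ and conditioning on the $\mu$-coordinate, one gets
\[
(\ed\circledast\ed^{-1}\circledast\mu)([0,t]) = (\rho\circledast\mu)([0,t]) = \int_{\R_+}\rho\!\left(\left[0,\tfrac{t}{w}\right]\right)d\mu(w) = \mu(\{0\}) + \int_{(0,\infty)}\frac{t}{t+w}\,d\mu(w),
\]
which is precisely $\Xi(\mu)([0,t])$. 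Since two probability measures on $\R_+$ with the same distribution function coincide, the identity follows.

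I do not anticipate a genuine obstacle: the only points needing care are the uniform bookkeeping of a possible atom of $\mu$ at $0$ (handled above, and matching $\Xi(\mu)(\{0\})=\mu(\{0\})$) and the finiteness of the integrals, which is immediate as all integrands are bounded by $1$. An equally short alternative would be to compare Mellin transforms, using $M_\ed(z)=\Gamma(1+z)$ and $M_{\ed^{-1}}(z)=\Gamma(1-z)$, so that $M_{\ed\circledast\ed^{-1}\circledast\mu}(z)=\Gamma(1+z)\Gamma(1-z)M_\mu(z)$, and checking that the measure in Proposition \ref{prop:key2} has the same Mellin transform; the distribution-function route is marginally cleaner since it avoids tracking strips of convergence. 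One may also simply deduce the statement from \cite[Theorem 5.4]{AH16a} together with \eqref{eq:Xi2}.
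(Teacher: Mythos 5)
Your argument is correct, but it is genuinely different from what the paper does: the paper gives no proof at all for this proposition, presenting it as a quotation of \cite[Theorem 5.4]{AH16a} (the option you mention in your last sentence, via \eqref{eq:Xi2}, is exactly the paper's route). Your verification is self-contained: rewriting Proposition \ref{prop:key2} as $\Xi(\mu)([0,t])=\mu(\{0\})+\int_{(0,\infty)}\frac{t}{t+u}\,d\mu(u)$ is a correct substitution $s=-t^{-1}$ in $\psi_\mu$, the computation $(\ed\circledast\ed^{-1})([0,s])=\frac{s}{1+s}$ is right (and recovers the paper's remark that $\ed\circledast\ed^{-1}$ is the Pareto law with density $(1+x)^{-2}\mathbf 1_{(0,\infty)}(x)$), and the conditioning step correctly accounts for a possible atom of $\mu$ at $0$, so both distribution functions agree on $[0,\infty)$ and the measures coincide. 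What your approach buys is independence from the external reference, at the modest cost of an extra page of elementary computation; what the citation buys is brevity and consistency with \cite{AH16a}, where the same map $\Phi(\bs_{1/2}\boxtimes\cdot)$ had already been identified with $\ed\circledast\ed^{-1}\circledast\cdot$. Your proposed Mellin-transform alternative would need a word about the atom at $0$ (the transform only sees the restriction to $(0,\infty)$), so the distribution-function route you chose is indeed the cleaner one.
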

\begin{rem}
The distribution $ \ed \circledast \ed^{-1}$ is the Pareto distribution having the density $(1+x)^{-2}\mathbf1_{(0,\infty)}(x)$.  
\end{rem}

Theorem \ref{thm:boole_hom} has a version for convolution semigroups. Recall that each $\mu \in \P$ associates the $\uplus$-convolution semigroup $\{\mu^{\uplus t}\}_{t\ge0}\subset\P$ such that $\eta_{\mu^{\uplus t}} = t \eta_\mu$ \cite[Proposition 3.1]{SW97}. It follows from \cite[Theorem 6.2]{Has10} that actually $\{\mu^{\uplus t}\}_{t\ge0}\subset\PM$ whenever $\mu \in \PM$.

\begin{prop} Let $t\ge0$ and $\mu\in \PM$. Then 
\[
\Xi(\mu^{\uplus t}) = \Xi(\mu)^{\bmaxxx t}. 
\]
\end{prop}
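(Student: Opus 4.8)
The plan is to mimic the argument used for $\Theta$ in Proposition \ref{prop:free_hom}, reducing the semigroup statement to the homomorphism property (Theorem \ref{thm:boole_hom}) together with the distributive law relating $\boxtimes$ and $\uplus$. Concretely, I would first recall from \eqref{eq:Xi2} that $\Xi(\mu) = \Phi(\bs_{1/2}\boxtimes\mu)$, where $\bs_{1/2}=\MP\boxtimes\MP^{-1}$ is the positive boolean stable law of index $1/2$. The boolean stable law satisfies a stability relation $\bs_{1/2}^{\uplus t} = D_{t^2}(\bs_{1/2})$ (this follows from $\eta_{\bs_{1/2}}(s) = -(-s)^{1/2}$, which scales as $\eta_{D_c(\mu)}(s)=\eta_\mu(cs)$, so $\eta_{\bs_{1/2}^{\uplus t}}(s) = t\eta_{\bs_{1/2}}(s) = \eta_{\bs_{1/2}}(t^2 s) = \eta_{D_{t^2}(\bs_{1/2})}(s)$). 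I would also need the distributive relation $(\mu\boxtimes\nu)^{\uplus t} = D_{1/t}((\mu^{\uplus t})\boxtimes(\nu^{\uplus t}))$, the boolean analogue of \cite[Proposition 3.5]{BN08}; if this is not literally available in the cited literature it can be checked directly via $S$-transforms or $\eta$-transforms.

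The computation would then run as follows:
\begin{align*}
\Xi(\mu^{\uplus t})
&= \Phi(\bs_{1/2}\boxtimes(\mu^{\uplus t})) \\
&= \Phi(D_{t^{-2}}((\bs_{1/2}^{\uplus t})\boxtimes(\mu^{\uplus t}))) \\
&= \Phi(D_{t^{-1}}((\bs_{1/2}\boxtimes\mu)^{\uplus t})) \\
&= \Phi(\bs_{1/2}\boxtimes\mu)^{\bmaxxx t} \\
&= \Xi(\mu)^{\bmaxxx t},
\end{align*}
where the second equality uses the stability relation for $\bs_{1/2}$, the third uses the distributive relation, the fourth uses the boolean identity from \eqref{eq:Ueda}, namely $\Phi(D_{1/t}(\rho^{\uplus t})) = \Phi(\rho)^{\bmaxxx t}$ with $\rho = \bs_{1/2}\boxtimes\mu$, and the last uses \eqref{eq:Xi2}. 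Alternatively, one can give a completely self-contained proof using only Proposition \ref{prop:key2}: writing $\eta$-transforms, $\frac1{\Xi(\mu^{\uplus t})([0,s])} = 1-\eta_{\mu^{\uplus t}}(-s^{-1}) = 1 - t\,\eta_\mu(-s^{-1})$, and then verifying that this equals the corresponding expression obtained by iterating the boolean max-convolution defining relation \eqref{eq:sum_boole} $t$-fold, i.e. $1 - t\bigl(1 - \frac1{\Xi(\mu)([0,s])}\bigr)$ — this is exactly the statement that the $\bmax$-convolution semigroup has $1/\Xi(\mu)^{\bmaxxx t}([0,s]) - 1 = t\,(1/\Xi(\mu)([0,s]) - 1)$.

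The main obstacle I anticipate is pinning down exactly which auxiliary identities may be invoked without proof. The identity $(\mu\boxtimes\nu)^{\uplus t} = D_{1/t}((\mu^{\uplus t})\boxtimes(\nu^{\uplus t}))$ is the one I would be least certain about being citable; it may need a short verification, e.g. using $S_{\mu^{\uplus t}}$ in terms of $S_\mu$, or one may instead prefer the second, self-contained route via Proposition \ref{prop:key2} and \eqref{eq:sum_boole}, which avoids this concern entirely and only relies on results already established in the paper. A minor point requiring care is the boundary case $t=0$ (where $\mu^{\uplus 0}=\delta_0$ and $\Xi(\delta_0)=\delta_0=\Xi(\mu)^{\bmaxxx 0}$) and the behaviour at the endpoint $s=0$, handled by the convention $\psi_\mu(-0^{-1}) = -1+\mu(\{0\})$ already fixed in Proposition \ref{prop:key2}.
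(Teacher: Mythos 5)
Your proposal is correct and matches the paper's intent: the paper omits the proof, saying only that it is ``similar to Proposition \ref{prop:free_hom} or Theorem \ref{thm:boole_hom}'', which corresponds exactly to your two routes (the stability/distributivity argument via \eqref{eq:Ueda}, and the direct $\eta$-transform computation via Proposition \ref{prop:key2}). Both of your routes check out — in particular the boolean distributive relation $(\mu\boxtimes\nu)^{\uplus t}=D_{1/t}((\mu^{\uplus t})\boxtimes(\nu^{\uplus t}))$ you were unsure about does hold and is verified by the $S$-transform computation $S_{\mu^{\uplus t}}(u)=\tfrac1t S_\mu\bigl(\tfrac{u}{t+(t-1)u}\bigr)$, though the self-contained second route is the cleaner choice since it uses only results already established in the paper.
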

\begin{proof}
The proof is similar to Proposition \ref{prop:free_hom}  or Theorem \ref{thm:boole_hom} and is omitted. 
\end{proof}

\begin{exam} Let $\alpha \in(0,1)$. Since $\eta_{\bs_\alpha}(t) = -(-t)^\alpha$, Proposition \ref{prop:key} yields that 
\[
\Xi(\bs_\alpha)([0,t])=\Phi(\MP^{-1} \boxtimes\MP \boxtimes \bs_\alpha)([0,t]) = \frac{t^\alpha}{1+t^\alpha}, \qquad t \ge0,  
\]
so that $\Xi(\bs_\alpha)$ is the Dagum distribution, which is a boolean extreme value distribution \cite[Corollary 4.1]{VV17}. It coincides with $(\ed^{-1} \circledast \ed)^{1/\alpha}$ as observed in \cite[Remark 6]{HSW20}, which also appears in Section \ref{sec4S}; see \eqref{eq:b1}. 

\end{exam}

\subsection{The classical case}\label{sec3C}

We define a classical analogue of the mappings $\Theta$ and $\Xi$.  In the following we understand that $\log 0=-\infty$. 
We begin with the following lemma which ensures that the definition makes sense.


\begin{lem}\label{lem:Gamma} For every $\mu\in\PM$, the mapping
\[
(0,\infty) \ni t\mapsto \int_0^\infty C_\mu(-x/t)e^{-x}\,dx \in (\log \mu(\{0\}),0)
\]
is a strictly increasing homeomorphism.  We will extend it to a homeomorphism from $[0,\infty)$ onto $[\log \mu(\{0\}),0)$ when convenient. 
\end{lem}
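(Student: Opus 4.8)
The plan is to study the function
\[
F_\mu(t) := \int_0^\infty C_\mu(-x/t)\,e^{-x}\,dx, \qquad t\in(0,\infty),
\]
and to establish three things: (i) $F_\mu$ is well-defined with values in $(\log\mu(\{0\}),0)$; (ii) $F_\mu$ is continuous and strictly increasing on $(0,\infty)$; (iii) $F_\mu(t)\to\log\mu(\{0\})$ as $t\downarrow0$ and $F_\mu(t)\to0$ as $t\to\infty$. Together with the intermediate value theorem these give the claimed homeomorphism onto $(\log\mu(\{0\}),0)$, and the boundary behaviour justifies the stated extension to $[0,\infty)$.

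First I would record the basic properties of the cumulant transform $C_\mu$ on $(-\infty,0]$: it is finite (since $\int e^{su}\,d\mu(u)\le1$ for $s\le0$ as $\mu$ is supported on $\R_+$), it is convex, non-decreasing, satisfies $C_\mu(0)=0$, and $C_\mu(s)\to\log\mu(\{0\})$ as $s\to-\infty$ by monotone convergence applied to $e^{su}\to\mathbf1_{\{u=0\}}$. Moreover $C_\mu(s)<0$ for $s<0$ unless $\mu=\delta_0$, and in the degenerate case $\mu=\delta_0$ one has $C_\mu\equiv-\infty$ on $(-\infty,0)$ and the statement reads as the constant map to $-\infty=\log\mu(\{0\})$, which I would dispose of separately at the outset. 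Assuming $\mu\ne\delta_0$: for fixed $t>0$ the integrand $C_\mu(-x/t)e^{-x}$ is negative, measurable, and bounded below in absolute value by an integrable function near $x=0$ (since $C_\mu$ is continuous at $0$) — but near $x=\infty$ we only know $C_\mu(-x/t)\ge\log\mu(\{0\})$, which may be $-\infty$; so I must check integrability. If $\mu(\{0\})>0$ then $C_\mu(-x/t)\ge\log\mu(\{0\})>-\infty$ and the integral converges and lies in $(\log\mu(\{0\})\int_0^\infty e^{-x}dx,0)=(\log\mu(\{0\}),0)$. If $\mu(\{0\})=0$ one needs a crude bound such as $C_\mu(s)\ge -cs\log(1/(-s))$-type control, or more simply: $\int e^{su}d\mu(u)\ge e^{s}\mu([0,1])$ for $s\le0$, hence $C_\mu(s)\ge s+\log\mu([0,1])$, a linear lower bound that makes $C_\mu(-x/t)e^{-x}$ integrable. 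So $F_\mu(t)\in(-\infty,0)$, and actually $F_\mu(t)>\log\mu(\{0\})$ because $C_\mu(-x/t)>\log\mu(\{0\})$ for every $x>0$.

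Next, for monotonicity and continuity I would use the substitution or simply the pointwise behaviour in $t$: since $x\mapsto x/t$ is decreasing in $t$ and $C_\mu$ is non-decreasing, $t\mapsto C_\mu(-x/t)$ is non-decreasing for each fixed $x>0$; it is in fact strictly increasing wherever $C_\mu$ is strictly increasing near $-x/t$, which (as $\mu\ne\delta_0$) happens on a set of $x$ of positive Lebesgue measure for each pair $t_1<t_2$. Hence by monotone/dominated convergence $F_\mu$ is non-decreasing, and strictly increasing. Continuity of $F_\mu$ follows from dominated convergence using the linear lower bound above as a uniform-in-$t$ (on compact subintervals) integrable dominating function together with continuity of $C_\mu$ on $(-\infty,0)$. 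For the endpoints: as $t\to\infty$, $-x/t\to0$ so $C_\mu(-x/t)\to0$ pointwise and dominated convergence gives $F_\mu(t)\to0$; as $t\downarrow0$, $-x/t\to-\infty$ so $C_\mu(-x/t)\to\log\mu(\{0\})$ pointwise, and monotone convergence (the integrand decreases as $t\downarrow0$) gives $F_\mu(t)\to\log\mu(\{0\})$, whether this limit is finite or $-\infty$.

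The main obstacle is the integrability/uniform-domination issue when $\mu$ has no atom at $0$ and heavy mass near $0$, making $C_\mu(-x/t)$ decay slowly (or the boundary value $-\infty$); the clean fix is the elementary linear bound $C_\mu(s)\ge s+\log\mu([0,1])$ for $s\le0$ (or $C_\mu(s)\ge s\,\varepsilon^{-1}+\log\mu([0,\varepsilon])$ for any $\varepsilon>0$), which simultaneously secures finiteness of $F_\mu(t)$ for each $t>0$ and provides the dominating function needed for continuity on compact $t$-intervals. Everything else is a routine application of monotone and dominated convergence plus the intermediate value theorem.
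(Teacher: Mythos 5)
Your proposal follows essentially the same route as the paper: lower-bound $-C_\mu(-u)$ linearly to get integrability and a dominating function, then use monotone/dominated convergence for continuity, strict monotonicity, and the endpoint limits $\log\mu(\{0\})$ and $0$. Two small corrections are needed, though. First, your main bound $C_\mu(s)\ge s+\log\mu([0,1])$ is vacuous when $\mu([0,1])=0$, and your fallback is misstated: restricting the integral to $[0,\varepsilon]$ gives $C_\mu(s)\ge \varepsilon s+\log\mu([0,\varepsilon])$ for $s\le0$, not $\varepsilon^{-1}s+\log\mu([0,\varepsilon])$ (the latter is false for $\varepsilon>1$, which is exactly the range you need when $\mu$ has no mass near $0$); the paper fixes this by choosing $a$ with $\mu([0,a])>0$ and using slope $a$. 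Second, your treatment of $\mu=\delta_0$ is off: there $C_{\delta_0}\equiv0$ (not $-\infty$) and $\log\mu(\{0\})=0$, so the target interval $(\log\mu(\{0\}),0)$ is empty and the statement is simply degenerate in that case (the paper passes over it silently as well). With these repairs the argument is complete and matches the paper's proof.
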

\begin{proof}
Let $f(u):= -C_\mu(-u)$. It is clear that $f$ is a non-negative strictly increasing continuous function on $[0,\infty)$ such that $f(0)=0$. By the dominated convergence theorem we get $
\lim_{t\to-\infty} \int_{\R_+} e^{t x} \,d\mu(x) = \mu(\{0\}) $ and hence $\lim_{u\to\infty}f(u) = -\log \mu(\{0\})$.  

We find constants $a, b\ge0$ depending only on $\mu$ such that $f(u) \le a u +b$. This is carried out as follows. Choose $a \ge 0$ such that 
$\mu([0,a])>0$. Then  
\begin{align*} 
C_\mu(-u) &= \log \int_{\R_+} e^{-ux}\,d\mu(x)  \ge \log \int_{[0,a]} e^{-ux}\,d\mu(x) \\
&\ge \log \int_{[0,a]} e^{-a u}\,d\mu(x) = -a u + \log \mu([0,a]),  
\end{align*}
so we can set $b = -  \log \mu([0,a])$ to get the inequality $f(u) \le a u +b$. 
This implies that  
$x\mapsto f(x/t)e^{-x} \in L^1((0,\infty),dx)$ for every $t \in (0,\infty)$. 

It is obvious that 
\[
t \mapsto \int_0^\infty f(x/t)e^{-x}\,dx
\]
is strictly decreasing, and also continuous by the dominated convergence theorem.  
The monotone convergence theorem implies that 
\[
\lim_{t\to0^+} \int_0^\infty f(x/t)e^{-x}\,dx =-\log \mu(\{0\}), 
\]
and since $|f(x/t)| \le f(x)$ for all $x \in (0,\infty), t\in (1,\infty)$, the dominated convergence theorem implies that 
\[
\lim_{t\to\infty} \int_0^\infty f(x/t)e^{-x}\,dx =0. 
\]
\end{proof}

According to Lemma \ref{lem:Gamma}, for each $\mu \in \PM$ we are able to define a probability measure $\Omega(\mu) \in \PM$ by 
\[
\Omega(\mu)([0,t]) =\exp\left(\int_0^\infty C_\mu(-x/t)e^{-x}\,dx\right), \qquad t \in (0,\infty) \qquad \text{and} \qquad \Omega(\mu)(\{0\})=\mu(\{0\}). 
\]
This definition is given so that the intertwining relations in Theorem \ref{thm:inter} below hold. 

\begin{thm} \label{thm:classical_hom}
The mapping $\Omega \colon (\PM, \ast) \to (\PM, \lor)$ is a homomorphism.  
\end{thm}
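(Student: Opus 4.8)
The plan is to follow exactly the strategy used for $\Theta$ and $\Xi$: reduce the homomorphism property of $\Omega$ to the additivity of the relevant cumulant transform. Concretely, I would first record the classical analogue of Propositions~\ref{prop:key} and~\ref{prop:key2}, namely the explicit formula
\[
\log \Omega(\mu)([0,t]) = \int_0^\infty C_\mu(-x/t)e^{-x}\,dx, \qquad t \in (0,\infty),
\]
which is simply the definition of $\Omega(\mu)$, together with the boundary value $\Omega(\mu)(\{0\})=\mu(\{0\})$ and the fact (from Lemma~\ref{lem:Gamma}) that $t\mapsto \log\Omega(\mu)([0,t])$ is a strictly increasing homeomorphism of $(0,\infty)$ onto $(\log\mu(\{0\}),0)$. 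The point is that this formula is \emph{linear} in $C_\mu$.

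Next I would fix $\mu,\nu\in\PM$ and compute, for $t\in(0,\infty)$,
\[
\log \Omega(\mu\ast\nu)([0,t]) = \int_0^\infty C_{\mu\ast\nu}(-x/t)e^{-x}\,dx = \int_0^\infty \bigl(C_\mu(-x/t)+C_\nu(-x/t)\bigr)e^{-x}\,dx = \log\Omega(\mu)([0,t]) + \log\Omega(\nu)([0,t]),
\]
using $C_{\mu\ast\nu}=C_\mu+C_\nu$ and linearity of the integral (the integrability of each piece is exactly what Lemma~\ref{lem:Gamma} guarantees). Exponentiating gives
\[
\Omega(\mu\ast\nu)([0,t]) = \Omega(\mu)([0,t])\,\Omega(\nu)([0,t]), \qquad t\in(0,\infty),
\]
which is precisely the defining relation of classical max-convolution $\lor$. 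It remains to check the relation at $t$ where one of the factors vanishes and at the atom $\{0\}$: since $\Omega(\lambda)([0,0])=\lambda(\{0\})$ for any $\lambda$ and $(\mu\ast\nu)(\{0\})=\mu(\{0\})\nu(\{0\})$ (a positive random variable is zero iff both summands are), the product formula extends continuously to $t=0$, covering the remaining case $t=0$; for $t>0$ the factors are strictly positive by Lemma~\ref{lem:Gamma}, so the degenerate convention in the definition of $\lor$ never intervenes. This completes the identification $\Omega(\mu\ast\nu)=\Omega(\mu)\lor\Omega(\nu)$.

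I do not expect any serious obstacle here; the proof is essentially a one-line computation once Lemma~\ref{lem:Gamma} is in place, and that lemma has already been proved. The only point requiring a moment of care is the interchange of sum and integral, i.e.\ verifying that $x\mapsto C_\mu(-x/t)e^{-x}$ and $x\mapsto C_\nu(-x/t)e^{-x}$ are separately integrable so that $\int (C_\mu+C_\nu)e^{-x} = \int C_\mu e^{-x} + \int C_\nu e^{-x}$; this is immediate from the linear bound $-C_\mu(-u)\le au+b$ established in the proof of Lemma~\ref{lem:Gamma}. A secondary, equally routine point is the behaviour at $t=0$ and at the atom, handled by the continuity statement in Lemma~\ref{lem:Gamma} and the elementary identity $(\mu\ast\nu)(\{0\})=\mu(\{0\})\nu(\{0\})$ for measures on $\R_+$.
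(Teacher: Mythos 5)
Your proof is correct and follows essentially the same route as the paper: the paper's proof is exactly the one-line computation $\Omega(\mu\ast\nu)([0,t]) = \exp\bigl(\int_0^\infty C_{\mu\ast\nu}(-x/t)e^{-x}\,dx\bigr) = \Omega(\mu)([0,t])\,\Omega(\nu)([0,t])$ using $C_{\mu\ast\nu}=C_\mu+C_\nu$, with the $t=0$ case handled by right-continuity or direct computation, just as you do. Your extra remarks on integrability (from Lemma~\ref{lem:Gamma}) and on $(\mu\ast\nu)(\{0\})=\mu(\{0\})\nu(\{0\})$ are fine and consistent with the paper's argument.
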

\begin{proof} The following straightforward computation is valid for $t>0$: 
\begin{align*}
\Omega(\mu \ast \nu)([0,t]) &=\exp\left(\int_0^\infty C_{\mu\ast\nu}(-x/t)e^{-x}\,dx\right) \\
&= \exp\left(\int_0^\infty C_{\mu}(-x/t)e^{-x}\,dx +\int_0^\infty C_{\nu}(-x/t)e^{-x}\,dx \right)\\
 &= \Omega(\mu) ([0,t])  \Omega(\nu) ([0,t]) \\
 &= (\Omega (\mu) \lor \Omega(\nu)) ([0,t]).  
\end{align*}
It also holds for $t=0$ by the right-continuity or direct computations.  
\end{proof}

We introduce another mapping $\tilde\Omega$ by 
\[
\tilde\Omega(\mu) = \ed^{-1}\circledast \mu. 
\] 
A straightforward computation yields 
\[
\tilde\Omega(\mu)([0,t]) = \int_{(0,\infty)} e^{- x/t} \,d\mu(x) = \exp(C_\mu(-t^{-1})), \qquad t \in (0,\infty). 
\]
We can easily show that the mapping $\tilde\Omega \colon (\PM, \ast) \to (\PM, \lor)$ is a homomorphism.  
This mapping will be studied in Section \ref{sec4S}.

\begin{exam}  By straightforward computations we get 
$$
\Omega(\cs_\alpha)([0,t]) = \exp (-\Gamma(1+\alpha)t^{-\alpha}), \qquad \tilde\Omega(\cs_\alpha)([0,t]) = \exp (-t^{-\alpha}),\qquad t \in (0,\infty),   
$$
so that both $\Omega(\cs_\alpha)$ and $\tilde\Omega(\cs_\alpha)$ are Fr\'etchet distributions. Note that $\tilde\Omega(\cs_\alpha)$ coincides with $\ed^{-1/\alpha}$. 
\end{exam}

\section{Intertwining relations}\label{sec4}
We will observe several intertwining relations for the homomorphisms $\Theta, \Xi, \Omega, \tilde\Omega $ together with Bercovici--Pata bijections and some homomorphisms related with stable distributions. In what follows when we talk about a homeomorphism between sets of probability measures, the continuity is always concerned with the weak convergence of probability measures. 

\subsection{Relations with Bercovici-Pata bijections}

Let $\Lambda$ be the Bercovici-Pata bijection (see \cite[Theorem 1.2]{BP99} and \cite[Definition 3.1]{BNT02}) which is a homeomorphic homomorphism from the semigroup $(\ID(\R,\ast),\ast)$ to $(\ID(\R,\boxplus),\boxplus)$. 
Its restriction $\Lambda|_{\ID(\R_+,\ast)}$ is a bijection from $\ID(\R_+,\ast)$ onto $\ID(\R_+,\boxplus)$.  According to \cite[Theorem 4.1]{BNT04}, the measure $\Lambda(\mu)$ for  $\mu\in \ID(\R_+,\ast)$ is characterized by 
\begin{equation}\label{eq:BP}
\tR_{\Lambda(\mu)}(t) = \int_0^\infty C_\mu(t x)e^{-x}\,dx,   \qquad t \in (-\infty,0).   
\end{equation}
Let $\Lambda^\lor$ be the homomorphism from the semigroup $(\PM, \lor)$ to $(\PM, \fmax)$:
\[
\Lambda^\lor (\mu)([0,t]) = \max\{1+\log \mu([0,t]),0\},  \qquad t\ge0,
\]
introduced in \cite[Section 6]{BAV06}, where the reader may understand $\Lambda^\lor (\mu)([0,t]) =0$ if $\mu([0,t])=0$.

A homeomorphic homomorphism $\mathcal{X}\colon (\mathcal{P}(\R),\uplus) \to(\ID(\R,\ast),\ast)$ is defined by 
\[
\eta_{\mathcal{X}^{-1}(\mu)} (t) = \int_0^\infty C_\mu(t x)e^{-x}\,dx = \tR_{\Lambda(\mu)}(t), \qquad t \in(-\infty,0),  
\] 
whose restriction to $\PM$ is a bijection onto $\ID(\R_+,\ast)$. Notice that $\mathcal{X}=\Lambda^{-1}\circ\Lambda_{bf}$ on $\PM$.  
We also have $\Omega(\mathcal{X}(\mu))([0,t])=\exp(\eta_\mu (-1/t))$ for $\mu\in \PM$, which implies in the limit $t\to0^+$ that 
\begin{align*}
\mathcal{X}(\mu)(\{0\})=\begin{cases}
\exp(1-\mu(\{0\})^{-1}), & \text{ if } \mu(\{0\})>0, \\
0, & \text{ if } \mu(\{0\})=0. 
\end{cases}
\end{align*}
Furthermore, let $\mathcal{X}^\lor$ be the homeomorphic homomorphism from the semigroup $(\PM,\cup \hspace{-.65em}\lor )$ to $(\PM, \lor)$:
\[
\mathcal{X}^\lor(\mu)([0,t])=\exp(1-\mu([0,t])^{-1}), \qquad t\ge 0,
\]
introduced in \cite[Section 4]{VV17}, where we understand $\mathcal{X}^\lor(\mu)([0,t])=0$ if $\mu([0,t])=0$. 

As a preliminary, the point mass of measures at $0$ is investigated below.

\begin{lem}\label{lem:atom_BP}
Let $\mu \in \ID(\R_+,\ast)$. 
\begin{enumerate}[label=\rm(\arabic*)]
\item\label{item:BP2} If $\mu(\{0\})<1/e$ then $\Aa_{\Lambda(\mu)}<\infty$. 
\item\label{item:BP3} If $\mu(\{0\})\ge 1/e$ then $\Aa_{\Lambda(\mu)}=\infty$. 
\item \label{item:BP4} $\Lambda(\mu)(\{0\}) = \max\{1+\log \mu(\{0\}),0\}$. 
\end{enumerate}
\end{lem}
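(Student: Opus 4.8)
The plan is to reduce all three assertions to a single monotonicity picture for the free cumulant transform of $\Lambda(\mu)$. First I would pin down the shape of $\tR_{\Lambda(\mu)}$: the characterization \eqref{eq:BP} gives $\tR_{\Lambda(\mu)}(t)=\int_0^\infty C_\mu(tx)e^{-x}\,dx$ on $(-\infty,0)$, and the change of variables $t=-1/s$ identifies this function with the homeomorphism of Lemma \ref{lem:Gamma}. Hence $\tR_{\Lambda(\mu)}$ is a strictly increasing homeomorphism from $(-\infty,0)$ onto $(\log\mu(\{0\}),0)$, with $\log 0:=-\infty$; in particular it is univalent and continuous on all of $(-\infty,0)$. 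The degenerate case $\mu=\delta_0$ (equivalently $\Lambda(\mu)=\delta_0$, equivalently $\mu(\{0\})=1$) satisfies \ref{item:BP2}, \ref{item:BP3}, \ref{item:BP4} at once by the conventions, so from now on I may assume $\Lambda(\mu)\neq\delta_0$ and use the identity $R_{\Lambda(\mu)}\bigl((-\Aa_{\Lambda(\mu)},0)\bigr)=(-1+\Lambda(\mu)(\{0\}),0)$ recalled in Section \ref{sec2}.

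Write $p:=\mu(\{0\})$ and $q:=\Lambda(\mu)(\{0\})$. Since $R_{\Lambda(\mu)}$ is the restriction of $\tR_{\Lambda(\mu)}$ to $(-\Aa_{\Lambda(\mu)},0)$, monotonicity shows that the left endpoint of its range is $\ell:=\lim_{t\downarrow-\Aa_{\Lambda(\mu)}}\tR_{\Lambda(\mu)}(t)$, so that $\ell=-1+q$. I would then distinguish two cases. If $\Aa_{\Lambda(\mu)}=\infty$, then $R_{\Lambda(\mu)}=\tR_{\Lambda(\mu)}$ on $(-\infty,0)$, hence $\ell=\log p$ and $q=1+\log p$; since $q\ge0$ this forces $\log p\ge-1$, i.e.\ $p\ge 1/e$. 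If $\Aa_{\Lambda(\mu)}<\infty$, then $\Lambda(\mu)$ has a finite negative moment, so $q=0$ and $\ell=-1$; but by continuity $\ell=\tR_{\Lambda(\mu)}(-\Aa_{\Lambda(\mu)})$, which is an interior value of $(\log p,0)$, hence $\log p<-1$, i.e.\ $p<1/e$.

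To finish, note that the two cases are exhaustive and mutually exclusive, as are the two alternatives $p\ge 1/e$ and $p<1/e$; since each case implies its alternative, the implications reverse and give $\Aa_{\Lambda(\mu)}=\infty$ if and only if $p\ge 1/e$. This is exactly \ref{item:BP2} and \ref{item:BP3}. For \ref{item:BP4}: if $p\ge 1/e$ the first case applies and $q=1+\log p=\max\{1+\log p,0\}$ because $1+\log p\ge0$; if $p<1/e$ the second case applies and $q=0=\max\{1+\log p,0\}$ because $1+\log p<0$.

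The part I expect to require the most care is not any calculation but the boundary value $p=1/e$: there the range of $\tR_{\Lambda(\mu)}$ is already exactly $(-1,0)$, so $\Aa_{\Lambda(\mu)}=\infty$ even though $\Lambda(\mu)$ has no atom at $0$. Getting the dichotomy to fall precisely at $1/e$ is exactly why, in the case $\Aa_{\Lambda(\mu)}<\infty$, one must use that $\tR_{\Lambda(\mu)}(-\Aa_{\Lambda(\mu)})$ lies strictly inside $(\log p,0)$, not merely in its closure. A secondary subtlety is that the cited range formula for $R_{\Lambda(\mu)}$ requires $\Lambda(\mu)\neq\delta_0$, which is why the case $\mu=\delta_0$ is disposed of separately at the outset.
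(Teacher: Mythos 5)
Your proof is correct and essentially the same as the paper's: both rest on identifying $\tR_{\Lambda(\mu)}$, via \eqref{eq:BP} and Lemma \ref{lem:Gamma}, as a strictly increasing homeomorphism from $(-\infty,0)$ onto $(\log\mu(\{0\}),0)$ and then comparing with the range formula $R_{\Lambda(\mu)}\bigl((-\Aa_{\Lambda(\mu)},0)\bigr)=(-1+\Lambda(\mu)(\{0\}),0)$. The only difference is organizational — you split on $\Aa_{\Lambda(\mu)}$ finite versus infinite and pass to contrapositives, while the paper splits directly on $\mu(\{0\})$ versus $1/e$ — and your separate treatment of $\mu=\delta_0$ is a harmless extra precaution.
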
 
\begin{proof} First note that, by Lemma \ref{lem:Gamma} and \eqref{eq:BP}, $\tR_{\Lambda(\mu)}$ maps $(-\infty,0)$ onto $(\log\mu(\{0\}),0)$. 

Suppose that $\mu(\{0\})<1/e$. Since $\log\mu(\{0\}) <-1$, we have $\Aa_{\Lambda(\mu)}<\infty$; otherwise the range of $\FC_{\Lambda(\mu)}$ would contain $-1$.  The established fact $\Aa_{\Lambda(\mu)}<\infty$ implies that $\Lambda(\mu)(\{0\})=0$. 

Suppose that $\mu(\{0\})\ge1/e$, which means $\log\mu(\{0\}) \ge -1$.  If  $\Aa_{\Lambda(\mu)}<\infty$ then there would exist $\epsilon\in(0,1)$ such that $R_{\Lambda(\mu)}((-\Aa_{\Lambda(\mu)},0)) = (-1+\epsilon,0)$. This implies that $\Lambda(\mu)(\{0\})=\epsilon>0$, a contradiction to $\Aa_{\Lambda(\mu)}<\infty$. Therefore, we must have $\Aa_{\Lambda(\mu)}=\infty$. Moreover, in this case we have $\FC_{\Lambda(\mu)}((-\Aa_{\Lambda(\mu)},0)) = (\log\mu(\{0\}),0)$, so that $-1+\Lambda(\mu)(\{0\})=\log\mu(\{0\}).$
\end{proof}


\begin{exam} As a special case of Lemma \ref{lem:atom_BP} \ref{item:BP4}, the Poisson distribution with rate $\lambda >0$ has the mass $e^{-\lambda}$ at $0$, while the Marchenko-Pastur distribution with rate $\lambda$, which is the image of the Poisson distribution by $\Lambda$, has the mass $\max \{1-\lambda,0\}$ at 0. 
\end{exam}

\begin{thm}\label{thm:inter} The following intertwining relations  
\[
\Lambda^\lor \circ \Omega = \Theta \circ \Lambda \quad  \text{on} \quad \ID(\R_+,\ast) \qquad\text{and}\qquad  \Omega\circ \mathcal{X} = \mathcal{X}^\lor\circ \Xi \quad \text{on}\quad  \PM 
\]
hold, which yield the following commuting diagram for homomorphisms: 
\[
\begin{diagram}
\node{(\PM,\uplus)}  \arrow{e,t}{\mathcal{X}} \arrow{s,l}{\Xi} \node{(\ID(\R_+,\ast),\ast)}\arrow{e,t}{\Lambda} \arrow{s,l}{\Omega}\node{(\ID(\R_+,\boxplus),\boxplus)} \arrow{s,r}{\Theta}\\
\node{(\PM,\cup\hspace{-.68em}\lor)} \arrow{e,b}{\mathcal{X}^\lor} \node{(\PM,\lor)}  \arrow{e,b}{\Lambda^\lor} \node{(\PM,\Box \hspace{-.75em}\lor).}
 \end{diagram}
\]
\end{thm}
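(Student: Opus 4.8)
The plan is to verify the two intertwining identities by direct computation of distribution functions, using the explicit formulas already established in the excerpt, and then to assemble the commuting diagram from these identities together with the homomorphism properties proved earlier (Theorems \ref{thm:classical_hom}, \ref{eq:free_hom}, \ref{thm:boole_hom}).

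First I would prove $\Lambda^\lor \circ \Omega = \Theta \circ \Lambda$ on $\ID(\R_+,\ast)$. Fix $\mu \in \ID(\R_+,\ast)$. By definition of $\Omega$ and then of $\Lambda^\lor$, one computes for $t>0$
\[
(\Lambda^\lor \circ \Omega)(\mu)([0,t]) = \max\Bigl\{1 + \int_0^\infty C_\mu(-x/t)e^{-x}\,dx,\ 0\Bigr\}.
\]
On the other hand, Proposition \ref{prop:key} applied to $\Lambda(\mu)$ gives $(\Theta\circ\Lambda)(\mu)([0,t]) = \max\{1 + R_{\Lambda(\mu)}(-t^{-1}),\,0\}$ for all $t\ge0$ (the $\max$ absorbing the case $t<\Aa_{\Lambda(\mu)}^{-1}$, where $R_{\Lambda(\mu)}$ has been extended to the endpoint as in the excerpt). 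Now substitute the characterization \eqref{eq:BP} of the Bercovici--Pata bijection: $R_{\Lambda(\mu)}(-t^{-1}) = \tR_{\Lambda(\mu)}(-t^{-1}) = \int_0^\infty C_\mu(-x/t)e^{-x}\,dx$, valid since by Lemma \ref{lem:Gamma} and \eqref{eq:BP} the integral lies in $(\log\mu(\{0\}),0)$, which is exactly the range of $\tR_{\Lambda(\mu)}$. Hence the two distribution functions agree for $t>0$, and the $t=0$ case follows from right-continuity together with Lemma \ref{lem:atom_BP}\ref{item:BP4}, which matches $\Lambda(\mu)(\{0\})$ with $\max\{1+\log\mu(\{0\}),0\}$. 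This proves the first identity.

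Next I would prove $\Omega\circ\mathcal{X} = \mathcal{X}^\lor\circ\Xi$ on $\PM$. For $\mu\in\PM$ and $t>0$, the excerpt already records $\Omega(\mathcal{X}(\mu))([0,t]) = \exp(\eta_\mu(-1/t))$. For the right-hand side, Proposition \ref{prop:key2} gives $\Xi(\mu)([0,t]) = 1 + \psi_\mu(-t^{-1})$, so that $1/\Xi(\mu)([0,t]) = 1 - \eta_\mu(-t^{-1})$ (as computed in the proof of Theorem \ref{thm:boole_hom}), and therefore, by the definition of $\mathcal{X}^\lor$,
\[
(\mathcal{X}^\lor\circ\Xi)(\mu)([0,t]) = \exp\bigl(1 - \Xi(\mu)([0,t])^{-1}\bigr) = \exp\bigl(1 - (1-\eta_\mu(-t^{-1}))\bigr) = \exp(\eta_\mu(-t^{-1})),
\]
which agrees with $\Omega(\mathcal{X}(\mu))([0,t])$. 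The boundary value at $t=0$ is handled by the limit computation $\mathcal{X}(\mu)(\{0\}) = \exp(1-\mu(\{0\})^{-1})$ recorded just before Lemma \ref{lem:atom_BP}, together with right-continuity; one checks both sides give $\Omega(\mathcal{X}(\mu))(\{0\}) = \mathcal{X}(\mu)(\{0\})$ (using $\Omega(\nu)(\{0\}) = \nu(\{0\})$) versus $\exp(1 - \Xi(\mu)(\{0\})^{-1})$ with $\Xi(\mu)(\{0\}) = \Phi$ of a measure whose atom at $0$ equals $\mu(\{0\})$.

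Finally, the commuting diagram: the top row $(\PM,\uplus) \to (\ID(\R_+,\ast),\ast) \to (\ID(\R_+,\boxplus),\boxplus)$ and the bottom row $(\PM,\cup\hspace{-.68em}\lor) \to (\PM,\lor) \to (\PM,\Box\hspace{-.75em}\lor)$ consist of homomorphisms by the results cited in the excerpt ($\mathcal{X}$, $\Lambda$, $\mathcal{X}^\lor$, $\Lambda^\lor$), and the three vertical maps $\Xi$, $\Omega$, $\Theta$ are homomorphisms by Theorems \ref{thm:boole_hom}, \ref{thm:classical_hom}, \ref{eq:free_hom}. The two intertwining identities just proved are precisely the commutativity of the left and right squares, so the whole diagram commutes. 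I expect the main obstacle to be bookkeeping at the atom $t=0$: one must carefully track the point masses at $0$ through each map (using Lemma \ref{lem:atom_BP} and the explicit limits for $\mathcal{X}(\mu)(\{0\})$) and confirm the $\max\{\cdot,0\}$ truncations in $\Theta$, $\Lambda^\lor$ and $\mathcal{X}^\lor$ are mutually consistent — on the interior $t>0$ the computations are entirely routine substitutions.
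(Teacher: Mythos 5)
Your overall strategy is the paper's: compare distribution functions via Proposition \ref{prop:key}, Proposition \ref{prop:key2} and \eqref{eq:BP}, and your treatment of the second identity $\Omega\circ\mathcal{X}=\mathcal{X}^\lor\circ\Xi$ is correct and essentially identical to the paper's computation $\exp(\eta_\mu(-1/t))$ on both sides, with right-continuity at $t=0$. The problem is in the first identity, at the parenthetical claim that Proposition \ref{prop:key} gives $(\Theta\circ\Lambda)(\mu)([0,t])=\max\{1+R_{\Lambda(\mu)}(-t^{-1}),0\}$ for all $t\ge 0$, ``the max absorbing the case $t<\Aa_{\Lambda(\mu)}^{-1}$.'' For $0<t<\Aa_{\Lambda(\mu)}^{-1}$ the quantity $R_{\Lambda(\mu)}(-t^{-1})$ is simply not defined: $-t^{-1}$ lies strictly below $-\Aa_{\Lambda(\mu)}$, beyond the endpoint extension. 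Your justification for replacing it by $\tR_{\Lambda(\mu)}(-t^{-1})$ (that the integral lies in $(\log\mu(\{0\}),0)$, the range of $\tR_{\Lambda(\mu)}$) does not show that $-t^{-1}$ is in the domain of $R_{\Lambda(\mu)}$; indeed for small $t$ the integral can be below $-1$, outside the range of $R_{\Lambda(\mu)}$. The substantive point to be proved is that the two truncation regions coincide, $\{t\ge0:\Omega(\mu)([0,t])\ge e^{-1}\}=[\Aa_{\Lambda(\mu)}^{-1},\infty)$, equivalently that $1+\tR_{\Lambda(\mu)}(-t^{-1})\le 0$ on $0<t<\Aa_{\Lambda(\mu)}^{-1}$. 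This interval is nondegenerate whenever $\mu(\{0\})<1/e$ (Lemma \ref{lem:atom_BP}\ref{item:BP2}), so this is not the ``bookkeeping at the atom $t=0$'' your closing paragraph anticipates; it is the core of the first intertwining relation, and in your write-up it is asserted rather than proved.

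The gap is fixable with the tools you already cite. Either argue as the paper does: reduce to showing the equality of the two sets above and split into the cases $\mu(\{0\})\ge 1/e$ (where $\Aa_{\Lambda(\mu)}=\infty$ and $\Omega(\mu)([0,t])\ge\mu(\{0\})\ge e^{-1}$ for all $t$) and $\mu(\{0\})<1/e$ (where $\Lambda(\mu)(\{0\})=0$, so $\Omega(\mu)([0,t])\ge e^{-1}$ iff $\tR_{\Lambda(\mu)}(-1/t)\ge-1$ iff $-1/t\ge-\Aa_{\Lambda(\mu)}$), both via Lemma \ref{lem:atom_BP}. Or, equivalently, note that by Lemma \ref{lem:Gamma} and \eqref{eq:BP} the function $\tR_{\Lambda(\mu)}$ is a strictly increasing homeomorphism of $(-\infty,0)$ onto $(\log\mu(\{0\}),0)$, that $\Aa_{\Lambda(\mu)}<\infty$ forces $\Lambda(\mu)(\{0\})=0$ so that $R_{\Lambda(\mu)}$ has range $(-1,0)$, and hence $\tR_{\Lambda(\mu)}(s)\le-1$ for $s\le-\Aa_{\Lambda(\mu)}$; this validates your $\max$ formula with $\tR_{\Lambda(\mu)}$ in place of $R_{\Lambda(\mu)}$ and closes the argument. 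With that supplement the proof is complete and follows the paper's route.
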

\begin{proof}
For $\mu \in \ID(\R_+,\ast)$ we have 
\[
(\Theta \circ \Lambda(\mu))([0,t]) = \begin{cases}
0, & 0 \le t < \Aa_{\Lambda(\mu)}^{-1}, \\
1+\int_0^\infty C_\mu(-x/t)e^{-x}\,dx, & t \ge \Aa_{\Lambda(\mu)}^{-1},
\end{cases}  
\]
while
\begin{align*}
(\Lambda^\lor \circ \Omega(\mu))([0,t])&=\begin{cases}
0, & 0 \le \Omega(\mu)([0,t]) < e^{-1}, \\
1+\int_0^\infty C_\mu(-x/t)e^{-x} dx, & \Omega(\mu)([0,t]) \ge e^{-1}. 
\end{cases}
\end{align*}
Hence it suffices to show that $\{t \in [0,\infty):  \Omega(\mu)([0,t])\ge e^{-1}\} = [\Aa_{\Lambda(\mu)}^{-1},\infty)$. 

\vspace{2mm}
\noindent
{\bf Case 1.} If $\mu(\{0\})\ge 1/e$ then $\Aa_{\Lambda(\mu)}^{-1}=0$ by Lemma \ref{lem:atom_BP}, and $\Omega(\mu)([0,t]) \ge \mu(\{0\})\ge 1/e$ for all $t\ge0$. 

\vspace{2mm}
\noindent
{\bf Case 2.} If $\mu(\{0\})< 1/e$ then $\Aa_{\Lambda(\mu)}^{-1}>0$ and $\Lambda(\mu)(\{0\})=0$ by Lemma \ref{lem:atom_BP}. Then the condition 
$
 \Omega(\mu)([0,t])\ge e^{-1}
$
is equivalent to that $\tR_{\Lambda(\mu)}(-1/t)=\int_0^\infty C_\mu(-x/t)e^{-x}\,dx\ge -1$. Since $\Lambda(\mu)(\{0\})=0$, this is further equivalent to that $-1/t \ge  -\Aa_{\Lambda(\mu)}$. 

\vspace{2mm}
\noindent
Thus the first intertwining relation has been established. 

For the second intertwining relation, given $\mu\in \PM$ and $t\in (0,\infty)$, we proceed as
\begin{align*}
(\Omega\circ\mathcal{X}(\mu))([0,t])&=\exp(\eta_\mu(-1/t))\\
&=\exp\left(1-\frac{1}{\Xi(\mu)([0,t])} \right)\\
&=(\mathcal{X}^\lor \circ\Xi(\mu))([0,t]).
\end{align*}
By the right-continuity, this holds for $t=0$ as well. 
\end{proof}

An obvious corollary of Theorem \ref{thm:inter} is the intertwining relation between boolean and free convolutions, which is described explicitly below. The homeomorphic homomorphism $\Lambda_{bf}:=\Lambda\circ \mathcal{X}\colon (\P,\uplus) \to (\ID(\R,\boxplus),\boxplus)$ is called the boolean-to-free Bercovici--Pata bijection (denoted by $\mathbb B$ in \cite[(2.20)]{BN08}). Its restriction $\Lambda_{bf}|_{\PM}$ induces a bijection from $\PM$ onto $\ID(\R_+,\boxplus)$. Let $\Lambda^\lor_{bf}:=\Lambda^\lor\circ \mathcal{X}^\lor \colon(\PM,\bmax) \to (\ID(\R_+, \fmax),\fmax)$ be the max-boolean-to-free homomorphism (denoted by $\mathbf{B}_1^M$ in \cite[Proposition 5.5]{Ua19}).  They are characterized by 
\begin{equation}\label{R-eta}
\tR_{\Lambda_{bf}(\mu)} = \eta_\mu \quad \text{on} \quad (-\infty,0) \qquad \text{and} \qquad \Lambda^\lor_{bf}(\mu)([0,t]) = \max\left\{2 - \frac{1}{\mu([0,t])},0\right\} \quad \text{for} \quad t\ge0, 
\end{equation}
where the reader may understand $\Lambda_{bf}^\lor (\mu)([0,t])=0$ if $\mu([0,t])=0$. From the latter formula, it can be observed that $\Lambda^\lor_{bf}$ is not injective.

It follows from the commuting diagram for homomorphisms in Theorem \ref{thm:inter} that 
\begin{equation}\label{eq:interBF}
\Theta\circ\Lambda_{bf}=\Lambda_{bf}^\lor\circ \Xi\qquad \text{on}\qquad \PM, 
\end{equation}
giving a relation between $\Theta$ and $\Xi$ different from \eqref{eq:Xi3}. The formula \eqref{eq:interBF} can also be more directly proved, without resorting to Theorem \ref{thm:inter} but with similar arguments, based on the following facts for $\mu\in \PM$ which may be of independent interest.  
\begin{enumerate}[label=\rm(\arabic*)]
\item\label{item2} If $\mu(\{0\})<1/2$ then $\Aa_{\Lambda_{bf}(\mu)}<\infty$. 
\item\label{item3} If $\mu(\{0\})\ge 1/2$ then $\Aa_{\Lambda_{bf}(\mu)}=\infty$. 
\item \label{item4} $\Lambda_{bf}(\mu)(\{0\}) = \max\{2-\frac1{\mu(\{0\})},0\}$. 
\end{enumerate}
The proofs of those facts are similar to Lemma \ref{lem:atom_BP}. The details are omitted here. 
 

\subsection{Relations with stable distributions}\label{sec4S}

For the sake of notational convenience, for $c \in \mathbb R $ and $\nu\in\PM$ let $\PO_c(\nu)$ stand for $\nu^c$, the push-forward of $\nu$ by the map $t\mapsto t^{c}$.  
We can easily check that $\PO_c$ is a homomorphism on any of the semigroups $(\PM,\lor)$, $(\PM, \fmax)$ and $(\PM,\bmax)$. Therefore, the mappings 
\begin{equation}\label{eq:composition1}
\PO_c\circ \tilde\Omega, \qquad \PO_c\circ  \Theta, \quad \text{and}\quad  \PO_c \circ \Xi 
\end{equation}
are homomorphisms from the corresponding additive convolution to the max one. 

On the other hand, in \cite[Theorem 4.14]{AH16a} the following mappings
\[
\CH_\alpha(\nu) = \cs_\alpha \circledast \nu^{1/\alpha}, \qquad \FH_\alpha(\nu) = \fs_\alpha \boxtimes \nu^{\boxtimes1/\alpha}, \qquad\text{and}\qquad \BH_\alpha(\nu) = \bs_\alpha \circledast \nu^{1/\alpha}
\] 
have been introduced and shown to be homomorphisms on $(\PM,\ast)$, $(\PM,\boxplus)$ and $(\PM,\uplus)$, respectively. Moreover, according to \cite[Theorem 4.5]{AH16a}  the identity
\[
 \bs_\alpha \circledast \nu^{1/\alpha} =  \bs_\alpha \boxtimes \nu^{\boxtimes1/\alpha}
\]
holds, which provides an alternative formula for $\BH_\alpha(\nu)$. 

The mappings 
\begin{equation}\label{eq:composition2}
 \tilde\Omega \circ \CH_\alpha, \qquad \Theta \circ \FH_\alpha, \quad \text{and}\quad  \Xi \circ \BH_\alpha
\end{equation}
are therefore homomorphisms from the corresponding additive convolution to the max one. We will relate \eqref{eq:composition1} and \eqref{eq:composition2} below. 

\begin{lem} Let $\alpha \in (0,1)$. The following identities hold:
\begin{align}
\ed^{-1} \circledast \cs_\alpha\ &= \ed^{-1/\alpha}, \label{eq:c1} \\
\MP^{-1} \boxtimes \fs_\alpha &= (\MP^{-1})^{\boxtimes 1/\alpha}, \label{eq:f1} \\
(\ed \circledast \ed^{-1} )\circledast \bs_\alpha &= (\ed \circledast \ed^{-1} )^{1/\alpha}, \label{eq:b1}\\
(\MP \boxtimes \MP^{-1}) \boxtimes \bs_\alpha &= (\MP \boxtimes \MP^{-1})^{\boxtimes 1/\alpha}.  \label{eq:b2}
\end{align}
\end{lem}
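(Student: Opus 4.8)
\emph{Proof proposal.} All four identities will be obtained by computing an appropriate transform and invoking its injectivity, and they split into two pairs: the $\circledast$-identities \eqref{eq:c1} and \eqref{eq:b1}, which I would treat with the Mellin transform, and the $\boxtimes$-identities \eqref{eq:f1} and \eqref{eq:b2}, which I would treat with the $S$-transform. A preliminary observation makes both approaches legitimate: every measure occurring in \eqref{eq:c1}--\eqref{eq:b2} has no atom at $0$, since the building blocks $\ed,\cs_\alpha,\bs_\alpha,\fs_\alpha,\MP,\MP^{-1}$ have none, and none of the operations $\circledast$, $\boxtimes$ (recall \eqref{eq:atoms}), or $\nu\mapsto\nu^{c}$ with $c\neq0$ can create one. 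Hence it suffices to match the Mellin transforms on $i\R$, resp.\ the $S$-transforms on a common interval, and then appeal to injectivity of these transforms on $\PM\setminus\{\delta_0\}$.

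For \eqref{eq:c1} and \eqref{eq:b1} I would use $M_{\mu\circledast\nu}=M_\mu M_\nu$ and the scaling rule $M_{\nu^{c}}(t)=M_\nu(ct)$, together with $M_{\ed}(t)=\Gamma(1+t)$, $M_{\cs_\alpha}(t)=\Gamma(1-t/\alpha)/\Gamma(1-t)$ (from the Example on positive stable laws), and $M_{\bs_\alpha}(t)=\sin(\pi t)/(\alpha\sin(\pi t/\alpha))$. The last one I would derive from the density of $\bs_\alpha$ via the substitution $y=x^{\alpha}$, which turns the Mellin integral into $\tfrac{\sin\pi\alpha}{\pi\alpha}\int_0^{\infty}y^{t/\alpha}(y^{2}+2y\cos\pi\alpha+1)^{-1}\,dy$, followed by the classical evaluation $\int_0^{\infty}y^{\beta}(y^{2}+2y\cos\theta+1)^{-1}\,dy=\pi\sin(\beta\theta)/(\sin\pi\beta\,\sin\theta)$. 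Then \eqref{eq:c1} is immediate from $M_{\ed^{-1}}(t)M_{\cs_\alpha}(t)=\Gamma(1-t)\cdot\Gamma(1-t/\alpha)/\Gamma(1-t)=\Gamma(1-t/\alpha)=M_{\ed^{-1/\alpha}}(t)$; and for \eqref{eq:b1} one first records $M_{\ed\circledast\ed^{-1}}(t)=\Gamma(1+t)\Gamma(1-t)=\pi t/\sin(\pi t)$ (Gamma reflection), so that $M_{(\ed\circledast\ed^{-1})\circledast\bs_\alpha}(t)=\pi t/(\alpha\sin(\pi t/\alpha))=M_{\ed\circledast\ed^{-1}}(t/\alpha)=M_{(\ed\circledast\ed^{-1})^{1/\alpha}}(t)$.

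For \eqref{eq:f1} and \eqref{eq:b2} I would use $\tS_{\mu\boxtimes\nu}=\tS_\mu\tS_\nu$ and the power rule $\tS_{\mu^{\boxtimes s}}=(\tS_\mu)^{s}$ for $s\ge1$ (the multiplicative analogue of $R_{\mu^{\boxplus t}}=tR_\mu$ for the partially defined semigroup $\{\mu^{\boxtimes s}\}_{s\ge1}$), together with $\tS_{\MP}(t)=1/(1+t)$, $\tS_{\MP^{-1}}(t)=-t$ (the latter from Haagerup--Schultz's formula $S_{\MP^{-1}}(t)=1/S_{\MP}(-t-1)$ as in the proof of Proposition \ref{prop:key}, or directly from $\MP^{-1}=\fs_{1/2}$ and Example \ref{ex:free_stable}), $\tS_{\fs_\alpha}(t)=(-t)^{(1-\alpha)/\alpha}$ (Example \ref{ex:free_stable}), and $S_{\bs_\alpha}(t)=(-t/(1+t))^{(1-\alpha)/\alpha}$. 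Since $1+\tfrac{1-\alpha}{\alpha}=\tfrac1\alpha$, the exponent bookkeeping is trivial: for \eqref{eq:f1},
\[
\tS_{\MP^{-1}\boxtimes\fs_\alpha}(t)=(-t)(-t)^{(1-\alpha)/\alpha}=(-t)^{1/\alpha}=\bigl(\tS_{\MP^{-1}}(t)\bigr)^{1/\alpha}=\tS_{(\MP^{-1})^{\boxtimes 1/\alpha}}(t),
\]
and for \eqref{eq:b2},
\[
\tS_{(\MP\boxtimes\MP^{-1})\boxtimes\bs_\alpha}(t)=\frac{-t}{1+t}\Bigl(\frac{-t}{1+t}\Bigr)^{(1-\alpha)/\alpha}=\Bigl(\frac{-t}{1+t}\Bigr)^{1/\alpha}=\tS_{(\MP\boxtimes\MP^{-1})^{\boxtimes 1/\alpha}}(t),
\]
so injectivity of the $S$-transform yields \eqref{eq:f1} and \eqref{eq:b2}.

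The Gamma/trigonometric manipulations and the $\bs_\alpha$-integral are routine. The one point that needs genuine care is in the $\boxtimes$-identities: I must check that the powers $(\MP^{-1})^{\boxtimes1/\alpha}$ and $(\MP\boxtimes\MP^{-1})^{\boxtimes1/\alpha}$ are well defined (they are, as $1/\alpha\ge1$) and that the product and power rules for $\tS$ are applied on an interval on which all the $S$-transforms involved are simultaneously defined and univalent. Here one should simply work on $(-1,0)$ throughout, since $\fs_\alpha$ and $\bs_\alpha$ have $S$-transforms with natural domain $(-1,0)$ while $\MP,\MP^{-1}$ have $\tS$ defined on all of $(-\infty,0)$; this domain bookkeeping is the only real subtlety.
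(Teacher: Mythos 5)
Your proposal is correct. For \eqref{eq:c1} and \eqref{eq:f1} it is essentially the paper's own argument: the identical Mellin-transform cancellation $\Gamma(1-t)\cdot\Gamma(1-t/\alpha)/\Gamma(1-t)$ for \eqref{eq:c1}, and the same $S$-transform bookkeeping $(-t)\cdot(-t)^{(1-\alpha)/\alpha}=(-t)^{1/\alpha}$ for \eqref{eq:f1}. Where you genuinely diverge is in the boolean identities: the paper deduces \eqref{eq:b1} from \eqref{eq:c1} together with the factorization $\bs_\alpha=\cs_\alpha\circledast\cs_\alpha^{-1}$, and \eqref{eq:b2} from \eqref{eq:f1} together with $\bs_\alpha=\fs_\alpha\boxtimes\fs_\alpha^{-1}$, both quoted from \cite[Proposition 4.12]{AH16a}, whereas you compute the Mellin transform of $\bs_\alpha$ directly from its density (your evaluation $M_{\bs_\alpha}(t)=\sin(\pi t)/(\alpha\sin(\pi t/\alpha))$ via the classical integral is correct) and use the $S$-transform of $\bs_\alpha$ already listed in the preliminaries. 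Both routes are sound: the paper's reduction avoids any integral evaluation and exhibits the structural fact that boolean stable laws are classical/free ``ratios'' of stable laws, which is exactly why \eqref{eq:b1}--\eqref{eq:b2} follow from \eqref{eq:c1}--\eqref{eq:f1}; your version is self-contained given the transforms of Section 2, and it makes explicit the points the paper leaves implicit (absence of atoms at $0$ so that Mellin/$S$-transforms are injective, the existence of the $\boxtimes$-powers for $1/\alpha\ge1$ with $S_{\mu^{\boxtimes s}}=S_\mu^{\,s}$, and the common domain $(-1,0)$). One cosmetic slip in your closing remark: in the paper's convention $\tS_{\MP}$ is defined only on $(-1,0)$ (the range of $\tR_{\MP}$), not on all of $(-\infty,0)$; this is immaterial since you work on $(-1,0)$ throughout.
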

\begin{proof}
The first identity follows from the Mellin transforms
$M_{\cs_\alpha}(t) = \Gamma(1-\frac{t}{\alpha})/\Gamma(1-t)$ and $M_\ed(t) = \Gamma(1+t).$  
The third identity follows from the first one and $\bs_\alpha =\cs_\alpha \circledast \cs_\alpha^{-1}$ \cite[Proposition 4.12]{AH16a}. The second identity follows from the fact $\MP^{-1}= \fs_{1/2}$ and the $S$-transform  
$
S_{\fs_\alpha}(t) = (-t)^{\frac{1-\alpha}{\alpha}}. 
$ 
The fourth identity follows from the second one and $\bs_\alpha  = \fs_\alpha \boxtimes \fs_\alpha^{-1}$ \cite[Proposition 4.12]{AH16a}. 
\end{proof}

\begin{rem} 
The identities above somehow suggest that $\MP$ corresponds to $\ed$. This correspondence has been observed in other contexts in free probability, see e.g.\ \cite{Sz15} and \cite[Remark 4.5]{HSz19}.
\end{rem}

\begin{prop} For every $\alpha \in (0,1)$ we have  
\[
\tilde\Omega \circ \CH_\alpha  = \PO_{1/\alpha} \circ \tilde\Omega, \qquad \Theta\circ \FH_\alpha = \PO_{1/\alpha} \circ \Theta, \qquad \Xi\circ\BH_\alpha = \PO_{1/\alpha} \circ \Xi.   
\]  
\end{prop}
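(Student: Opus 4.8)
The plan is to treat the three relations in parallel: in each case one absorbs the stable law into the ``powered'' factor $\nu^{1/\alpha}$ (resp.\ $\nu^{\boxtimes 1/\alpha}$) using the Lemma just proved, and then everything reduces to the fact that raising to the power $1/\alpha$ is a homomorphism for the relevant multiplicative convolution. I would first observe that $\nu=\delta_0$ is trivial, since all the maps involved fix $\delta_0$, and assume $\nu\ne\delta_0$ from now on. For the classical case, associativity of $\circledast$ and \eqref{eq:c1} give
\[
\tilde\Omega(\CH_\alpha(\nu))=\ed^{-1}\circledast\cs_\alpha\circledast\nu^{1/\alpha}=\ed^{-1/\alpha}\circledast\nu^{1/\alpha},
\]
while, since $\rho\mapsto\rho^{1/\alpha}=\PO_{1/\alpha}(\rho)$ is induced by the automorphism $x\mapsto x^{1/\alpha}$ of $((0,\infty),\cdot)$ and is therefore a homomorphism of $(\PM,\circledast)$,
\[
\PO_{1/\alpha}(\tilde\Omega(\nu))=(\ed^{-1}\circledast\nu)^{1/\alpha}=(\ed^{-1})^{1/\alpha}\circledast\nu^{1/\alpha}=\ed^{-1/\alpha}\circledast\nu^{1/\alpha}.
\]
The two sides agree; the atom at $0$ is matched directly or by right-continuity.

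For $\Theta\circ\FH_\alpha$ I would compute through Proposition \ref{prop:key}. Using $S_{\mu\boxtimes\rho}=S_\mu S_\rho$, $S_{\rho^{\boxtimes 1/\alpha}}=S_\rho^{1/\alpha}$, $S_{\fs_\alpha}(u)=(-u)^{(1-\alpha)/\alpha}$ and $R_\rho^{-1}(u)=uS_\rho(u)$, a short computation---in which the fractional power of $-u$ carried by $\fs_\alpha$ cancels exactly against the power produced when $S_\nu$ is converted back into $R_\nu^{-1}$---yields
\[
R_{\FH_\alpha(\nu)}^{-1}(u)=-\bigl(-R_\nu^{-1}(u)\bigr)^{1/\alpha},\qquad u\in(-1+\nu(\{0\}),0).
\]
Reading off the domains of the two sides gives $\FH_\alpha(\nu)(\{0\})=\nu(\{0\})$ and $\Aa_{\FH_\alpha(\nu)}=\Aa_\nu^{1/\alpha}$, and substituting $u=R_\nu(-t^{-\alpha})$ gives $R_{\FH_\alpha(\nu)}(-t^{-1})=R_\nu(-t^{-\alpha})$ for $t>\Aa_\nu^{-1/\alpha}$. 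Feeding these three facts into Proposition \ref{prop:key} and comparing with $\PO_{1/\alpha}(\Theta(\nu))([0,t])=\Theta(\nu)([0,t^\alpha])$ gives $\Theta(\FH_\alpha(\nu))([0,t])=\PO_{1/\alpha}(\Theta(\nu))([0,t])$ for every $t\ge0$.

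The relation $\Xi\circ\BH_\alpha=\PO_{1/\alpha}\circ\Xi$ is entirely analogous through Proposition \ref{prop:key2}, now using $\BH_\alpha(\nu)=\bs_\alpha\boxtimes\nu^{\boxtimes 1/\alpha}$, $S_{\bs_\alpha}(u)=(\tfrac{-u}{1+u})^{(1-\alpha)/\alpha}$ and $\psi_\nu^{-1}(u)=\tfrac{u}{1+u}S_\nu(u)$: the same cancellation collapses the computation to $\psi_{\BH_\alpha(\nu)}^{-1}(u)=-(-\psi_\nu^{-1}(u))^{1/\alpha}$, whence $\psi_{\BH_\alpha(\nu)}(-t^{-1})=\psi_\nu(-t^{-\alpha})$, which is exactly what Proposition \ref{prop:key2} requires. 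A more conceptual alternative for the two free identities is available: by \eqref{eq:f1}, \eqref{eq:b2} and the homomorphism property of $\sigma\mapsto\sigma^{\boxtimes 1/\alpha}$ one has $\MP^{-1}\boxtimes\FH_\alpha(\nu)=(\MP^{-1}\boxtimes\nu)^{\boxtimes 1/\alpha}$ and $\MP^{-1}\boxtimes\MP\boxtimes\BH_\alpha(\nu)=(\MP^{-1}\boxtimes\MP\boxtimes\nu)^{\boxtimes 1/\alpha}$, so both identities follow from $\Phi(\sigma^{\boxtimes c})=\Phi(\sigma)^c$ for $c\ge1$, which is read off from the characterization \eqref{eq:HM} using $S_{\sigma^{\boxtimes c}}=S_\sigma^c$ and $\sigma^{\boxtimes c}(\{0\})=\sigma(\{0\})$.

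The main obstacle is the sign bookkeeping in the free computations: one must check that every quantity raised to a non-integer power---$-u$, $-R_\nu^{-1}(u)$, $\tfrac{-u}{1+u}$, $-\psi_\nu^{-1}(u)$, and the $S$-transforms themselves---is positive on the interval being used, and that the exponents contributed by the stable law and by the transform conversions combine to exactly $1$ in the right place. In the conceptual route the same difficulty is repackaged as the constancy of the atom at $0$ along the free multiplicative convolution semigroup (together with the matching of the domains of the relevant $S$-transforms), the one ingredient not stated verbatim in Section \ref{sec2}.
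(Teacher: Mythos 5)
Your proposal is correct. The classical identity is handled exactly as in the paper (combine \eqref{eq:c1} with $\tilde\Omega(\nu)=\ed^{-1}\circledast\nu$ and the fact that $\PO_{1/\alpha}$ respects $\circledast$). For the free and boolean identities your primary route differs from the paper's: you compute $S_{\FH_\alpha(\nu)}$ and $S_{\BH_\alpha(\nu)}$ directly, convert to $R_{\FH_\alpha(\nu)}^{-1}(u)=-\bigl(-R_\nu^{-1}(u)\bigr)^{1/\alpha}$ and $\psi_{\BH_\alpha(\nu)}^{-1}(u)=-\bigl(-\psi_\nu^{-1}(u)\bigr)^{1/\alpha}$ (the sign and exponent bookkeeping you worry about does check out, since $S_\nu>0$ and $-u,\tfrac{-u}{1+u}>0$ on the relevant intervals), and then feed the resulting distribution functions into Propositions \ref{prop:key} and \ref{prop:key2}, including the matching of the thresholds $\Aa_{\FH_\alpha(\nu)}^{-1}=\Aa_\nu^{-1/\alpha}$. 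The paper instead absorbs the stable law first via \eqref{eq:f1} (resp.\ \eqref{eq:b1} or \eqref{eq:b2}) and then invokes $\Phi(\mu^{\boxtimes t})=\Phi(\mu)^t$ for $t\ge1$ in the free case, and the formula $\Xi(\mu)=\ed\circledast\ed^{-1}\circledast\mu$ in the boolean case; this is precisely your ``more conceptual alternative,'' so your write-up in fact contains the paper's proof as a special case. What your main route buys is independence from the scaling law for $\Phi$ and from the $\circledast$-representation of $\Xi$, at the cost of a longer transform computation; both routes (yours and the paper's) rely on the unstated but standard facts $S_{\nu^{\boxtimes 1/\alpha}}=S_\nu^{1/\alpha}$ and $\nu^{\boxtimes 1/\alpha}(\{0\})=\nu(\{0\})$ for the Belinschi--Nica power semigroup, which you correctly flag as the one ingredient not recorded in Section \ref{sec2}.
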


\begin{proof}
The first identity is a simple combination of \eqref{eq:c1} and $\tilde\Psi(\nu) = \ed^{-1}\circledast \nu$. The second identity follows from 
\[
\MP^{-1} \boxtimes \fs_\alpha \boxtimes \nu^{\boxtimes 1/\alpha} = (\MP^{-1}\boxtimes\nu)^{\boxtimes 1/\alpha}, 
\]
which is a consequence of \eqref{eq:f1}, and the fact that $\Phi (\mu^{\boxtimes t})=\Phi(\mu)^t$ for all $t\ge1$. The third identity follows from \eqref{eq:b1} and the formula 
$\Xi(\mu) = \ed \circledast \ed^{-1} \circledast \mu$; note that one can alternatively use \eqref{eq:b2} and the formula $\Xi(\mu) = \Phi(\MP \boxtimes \MP^{-1}\boxtimes \mu)$. 
 \end{proof}
 
 \begin{rem} There seems no concise formula that relates $\Omega \circ \CH_\alpha$ with $\PO_{1/\alpha} \circ \Omega$. 
 \end{rem}

 \section*{Acknowledgements} 
 
T.H.\ is supported by JSPS Grant-in-Aid for Young Scientists 19K14546. This research is an outcome of Joint Seminar supported by JSPS and CNRS under the Japan-France Research Cooperative Program.


\end{document}